\documentclass[11pt]{amsart}

\usepackage{amssymb}
\usepackage{amsmath}
\usepackage{amsthm}
\usepackage{amscd}
\usepackage{amsfonts}
\usepackage{ascmac}
\usepackage[usenames]{color}
\usepackage{graphics}

\theoremstyle{plain}
\newtheorem{thm}{Theorem}[section]
\newtheorem{prop}[thm]{Proposition}
\newtheorem{lem}[thm]{Lemma}

\newtheorem{ex}[thm]{Example}
\newtheorem{rem}[thm]{Remark}



\newcommand{\bfC}{{\mathbf C}}
\newcommand{\bfP}{{\mathbf P}}
\newcommand{\bfR}{{\mathbf R}}

\newcommand{\bari}{{\overline i}}
\newcommand{\barj}{{\overline j}}
\newcommand{\bark}{{\overline k}}

\newcommand{\baru}{{\overline u}}

\newcommand{\barz}{{\overline z}}

\newcommand{\barbet}{{\overline \beta}}

\newcommand{\barL}{\overline {L}}

\newcommand{\barpartial}{{\overline \partial}}

\newcommand{\mapright}[1]{\smash{\mathop{   \hbox to 0.7cm{\rightarrowfill}}
  \limits^{#1}}}

\def\grad{\mathrm{grad}}

\renewcommand{\emph}[1]{{\color{red} \it #1}}

\definecolor{orange}{cmyk}{0, 0.7, 1, 0}
\definecolor{light-green}{cmyk}{0.5, 0, 0.5, 0}
\definecolor{light-blue}{cmyk}{0.5, 0, 0, 0}
\definecolor{light-yellow}{cmyk}{0,0,0.6,0}
\definecolor{dark-green}{cmyk}{0.7, 0, 0.7, 0.5}

\title{Conformally Einstein-Maxwell K\"ahler metrics and structure of the automorphism group}
\author{Akito Futaki and Hajime Ono}
\address{Graduate School of Mathematical Sciences, The University of Tokyo, 3-8-1 Komaba Meguro-ku Tokyo 153-8914, Japan}
\email{afutaki@ms.u-tokyo.ac.jp}
\address{Department of Mathematics, Saitama University, 255 Shimo-Okubo, Sakura-Ku,
Saitama 380-8570, Japan}
\email{hono@rimath.saitama-u.ac.jp}

\date{August 9, 2017}

\pagestyle{plain}

\begin{document}
\begin{abstract} Let $(M,g)$ be a compact K\"ahler manifold and $f$ a positive smooth function such that its
Hamiltonian vector field $K = J\mathrm{grad}_g f$ for the K\"ahler form $\omega_g$ is a holomorphic Killing vector field. 
We say that the pair $(g,f)$ is conformally Einstein-Maxwell
K\"ahler metric if the conformal metric $\Tilde g = f^{-2}g$ has constant scalar curvature. In this paper we prove 
a reductiveness result of the reduced Lie algebra of holomorphic vector fields for conformally Einstein-Maxwell 
K\"ahler manifolds, extending the Lichnerowicz-Matsushima Theorem for constant scalar curvature K\"ahler manifolds. 
More generally we consider extensions of Calabi functional and extremal K\"ahler metrics, and prove an extension 
of Calabi's theorem on the structure of the Lie algebra of holomorphic vector fields
for extremal K\"ahler manifolds. The proof uses a Hessian formula for 
the Calabi functional under the set up of Donaldson-Fujiki picture.
\end{abstract}

\maketitle


\section{Introduction.}

Let $(M,J)$ be a compact complex manifold admitting a K\"ahler structure. A Hermitian metric
$\Tilde{g}$ on $(M,J)$ is said to be 
a conformally K\"ahler, Einstein-Maxwell (cKEM for short) metric
if there exists a positive smooth function $f$ on $M$  such that 
$g=f^2\Tilde{g}$  is K\"ahler, that the Hamiltonian vector field $K=J\mathrm{grad}_gf$
for the K\"ahler form $\omega_g$ 
 is Killing for $g$ (and also for $\Tilde{g}$ necessarily), and that 
the scalar curvature $s_{\Tilde{g}}$ of $\Tilde g$ is constant.

$K$ is necessarily a holomorphic vector field since any Killing vector field on a 
compact K\"ahler manifold is holomorphic. If $f$ is a constant function then $g$ is a K\"ahler metric of constant scalar curvature.
When $f$ is not a constant function, typical known examples are conformally K\"ahler, Einstein metrics
by Page \cite{Page78} on the one-point-blow-up
of $\mathbf C\mathbf P^2$, by Chen-LeBrun-Weber \cite{ChenLeBrunWeber} on the two-point-blow-up
of $\mathbf C\mathbf P^2$, 
by Apostolov-Calderbank-Gauduchon \cite{ACG}, \cite{ACG15} on 4-orbifolds
and by B\'erard-Bergery \cite{BB82} on $\mathbf P^1$-bundles over Fano K\"ahler-Einstein manifolds. 

In the more recent studies, non-Einstein cKEM examples are constructed by LeBrun \cite{L1}, \cite{L2} 
showing that there are ambitoric examples on $\mathbf C\mathbf P^1 \times 
\mathbf C\mathbf P^1$ and the  one-point-blow-up
of $\mathbf C\mathbf P^2$, 
and by Koca-T{\o}nnesen-Friedman \cite{KT} on ruled surfaces of higher genus.
The authors \cite{FO17} also extended LeBrun's construction on $\mathbf C\mathbf P^1 \times 
\mathbf C\mathbf P^1$ to $\mathbf C\mathbf P^1 \times M$ where $M$ is a compact 
constant scalar curvature K\"ahler manifold of arbitrary dimension. 

In this paper we are interested in finding a K\"ahler metric $g$ in a fixed K\"ahler class and a positive
Hamiltonian Killing potential $f$ such that $\Tilde{g} = f^{-2}g$ is a conformally K\"ahler, Einstein-Maxwell metric.
This point of view is taken by Apostolov-Maschler \cite{AM}. 
Thus we may call such a K\"ahler metric $g$ a {\it conformally Einstein-Maxwell K\"ahler metric}. 
Fixing a K\"ahler class, the primary question is which choice of a Killing vector field $K$ is the right one.
The answer is given by the volume minimization as shown by \cite{FO17} in the same spirit as K\"ahler-Ricci solitons
by Tian-Zhu \cite{TZ02} and Sasaki-Einstein metrics by Martelli-Sparks-Yau \cite{MSY2}, see also \cite{FOW}.
The critical points of the volume functional considered in \cite{FO17} are precisely those Killing vector fields 
such that the natural obstruction defined in \cite{AM} vanishes. 
This obstruction will be introduced in Remark \ref{Futaki1} and Remark \ref{Futaki2} in this paper, and is an extension of the one given for constant scalar 
curvature K\"ahler metrics in \cite{futaki83.1}, \cite{futaki83.2}. 
The secondary question is whether the Lichnerowicz-Matsushima theorem (\cite{matsushima57}, \cite{Lic}) asserting
the reductiveness of the Lie algebra of holomorphic vector fields can be extended for conformally
Einstein-Maxwell K\"ahler manifolds. The main theorem of this paper, Theorem \ref{main thm}, gives 
an affirmative answer to this question\footnote{A.Lahdili obtained independently a proof of Theorem \ref{main thm} in \cite{Lahdili17}}.

More generally we consider extensions of Calabi functional and extremal K\"ahler metrics
and prove an extension of Calabi's theorem
for extremal K\"ahler manifolds. 
We call the extended metric an $f$-extremal metric where $f$ stands for the Hamiltonian function of a Killing vector field $K$. The proof uses a Hessian formula for 
the Calabi functional under the set up of Donaldson-Fujiki picture. The arguments in the finite dimensional setting
is given by Wang \cite{Lijing06}. Wang's arguments were effectively employed in \cite{futaki07.1} 
to show an extension of Calabi's
structure theorem for extremal K\"ahler metrics to perturbed extremal K\"ahler metrics defined in \cite{futaki06}.
The proof of the present paper follows closely the lines of arguments in \cite{futaki07.1},
but we will not avoid overlaps 
with \cite{futaki07.1} so as to make the exposition self-contained. 

After this introduction, in section 2, we consider the Donaldson-Fujiki type formulation of the problem, and
compute the first variation of the scalar curvature of the conformal metrics $\Tilde{g}$. This computation and the resulting consequenes
have been obtained by Apostolov-Maschler \cite{AM}. A novelty, if any, is just that the derivation of the formulae 
is closer to the original
style of Calabi's computation \cite{calabi85}, and this enables the computations in section 3 possible. 
In section 3, we prove a Hessian formula of the Calabi functional.
As mentioned above, we follow the derivation of the formula given by Wang \cite{Lijing06} and the first author 
\cite{futaki07.1}. As an example we consider the case of the one-point-blow-up of $\bfC\bfP^2$.
In section4 we give an example of $f$-extremal K\"ahler metrics.


\section{Calabi type formulation and Donaldson-Fujiki set-up.}

Let $\Omega \in H^2_{DR}(M,\bfR)$ be a fixed K\"ahler class, and choose a K\"ahler metric $g$ 
with its K\"ahler form $\omega_g$ in $\Omega$.
Denote by $\mathrm{Aut}(M,\Omega)$ the group of all biholomorphisms of $M$ preserving $\Omega$. Its Lie 
algebra, denoted by $\mathfrak h(M)$, is the same as the Lie algebra of the full automorphism group 
$\mathrm{Aut}(M)$ and consists of all holomorphic vector fields on $M$. Consider the Lie subalgebra
$\mathfrak g$ of $\mathfrak h(M)$ defined as the kernel of the differential of the homomorphism of $\mathrm{Aut}(M)$ to the Albanese
torus induced by the Albanese map. Differential geometric expression of $\mathfrak g$ is given by
\begin{equation}\label{reduced}
\mathfrak g = \{ X \in \mathfrak h(M)\ |\ \mathrm{grad}'u = X\ \text{for\ some\ }u \in C_\bfC^\infty(M)\}
\end{equation}
where $C_\bfC^\infty(M)$ denotes the set of all complex valued smooth functions on $M$ and
\begin{equation}\label{grad}
\mathrm{grad}'u = g^{i\barj}\frac{\partial u}{\partial \barz^j} \frac{\partial}{\partial z^i},
\end{equation}
refer to, for example, \cite{lebrunsimanca93}, \cite{GauduchonLN}.
The Lie subalgebra $\mathfrak g$ is independent of the choice of the K\"ahler metric as can be seen from
the Albanese map description, and is referred to as the
{\it reduced Lie algebra of holomorphic vector fields}. 
For the purpose of the present paper, it is convenient to take (\ref{reduced}) to be the definition of  $\mathfrak g$.
Then the independence of the choice of the metric
can also be seen from the standard fact that a holomorphic vector field $X$ belongs to $\mathfrak g$ if and 
only if $X$ has a zero, see \cite{matsushima71}, \cite{CL77}, \cite{lebrunsimanca93},
\cite{GauduchonLN}. 
If $\omega_g = i g_{i\barj} dz^i \wedge d\barz^j$ is the K\"ahler form then 
$\mathrm{grad}'u = X$ is equivalent to
$$i(X) \omega_g = i \barpartial u.$$
We shall write $X_u:= \mathrm{grad}'u$. 
The Lie algebra structure of $\mathfrak g$ is then given by 
$$ [X_u,Y_w] = X_{\{u,w\}}$$
where $\{u,w\} = \nabla^iu\nabla_i w - \nabla^iw\nabla_iu$, $\nabla^i = g^{i\barj}\nabla_\barj$.

Let $G$ be the connected subgroup of  $\mathrm{Aut}(M)$ corresponding to $\mathfrak g$, i.e.
$\mathrm{Lie}(G) = \mathfrak g$. $G$ is referred to as the {\it reduced automorphism group}.
$G$ is then a subgroup of $\mathrm{Aut}(M,\Omega)$ since $G$ is the connected group containing 
the identity.
Let $T$ be a maximal torus of $G$, and $T^c$ be the complexification
of $T$. Then $T^c$ is a subgroup of the maximal reductive subgroup $G_r$ of $G$.

Let $K \in \mathrm{Lie}(T)$ be arbitrarily chosen. The problem is to find a K\"ahler form $\omega_g \in \Omega$
such that\\
(i) $f$ is a positive smooth function with $J\mathrm{grad}_{g} f = K$, and that\\
(ii) $\Tilde g = f^{-2}g$ is a conformally K\"ahler, Einstein-Maxwell metric.\\
At this point we could assume that $K$ is a critical point of the volume functional introduced in \cite{FO17}
so that the obstruction in \cite{AM} vanishes. But this assumption is needless for the arguments in what follows,
and we do not assume it.

Denote by $\mathrm{Isom}(M,g)$ the group of all isometries of $(M,g)$.
The following is the main theorem of this paper.
\begin{thm}\label{main thm} Let $\Tilde g$ be a conformally K\"ahler, Einstein-Maxwell metric so that (i) and
(ii) above are satisfied. Then 
 the the centralizer 
 $G^K = \{ g \in G\ | \ Ad(g)K = K\}$ of $K$ in  
 the reduced automorphism
 group $G$ is the complexification of $\mathrm{Isom}(M,g) \cap G^K$. In particular $G^K$ is reductive.
\end{thm}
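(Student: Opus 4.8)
The plan is to reduce the group statement to a Lie-algebra decomposition and then to run a weighted version of the Lichnerowicz--Matsushima argument, the weight being the power of the conformal factor $f$ dictated by the first-variation computation of Section 2.

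First I would pass to Lie algebras. Write $\mathfrak g^K = \mathrm{Lie}(G^K)$ and let $\mathfrak k^K$ denote the Lie algebra of $\mathrm{Isom}(M,g)\cap G^K$. It suffices to establish the real-vector-space splitting $\mathfrak g^K = \mathfrak k^K \oplus J\mathfrak k^K$, i.e. that $\mathfrak g^K = \mathfrak k^K\otimes_{\mathbb R}\mathbb C$; reductiveness then follows because $\mathfrak k^K$ is the Lie algebra of a compact group, and the group statement is recovered by integrating. By (\ref{reduced}) every $X\in\mathfrak g^K$ is of the form $X=X_u=\mathrm{grad}'u$ for a complex potential $u$, and the condition $[K,X_u]=X_{\{f,u\}}=0$ lets me choose $u$ to be $K$-invariant (equivalently $\{f,u\}$ constant, normalized to $0$). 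Thus $\mathfrak g^K$ is parametrized by $K$-invariant complex potentials lying in the kernel of $\mathcal D=\bar\partial\circ\mathrm{grad}'$.

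Next I would install the weighted framework natural to the cKEM problem: the $f$-weighted inner product $\langle u,v\rangle_f = \int_M u\bar v\, f^{-p}\,\omega_g^n$, with exponent $p$ determined by the conformal transformation law of the scalar curvature and the Donaldson--Fujiki weight of Section 2, together with the fourth-order operator $\mathcal L_f = \mathcal D^{*}\mathcal D$ (adjoint taken in $\langle\cdot,\cdot\rangle_f$), whose kernel on $K$-invariant functions characterizes holomorphy of $\mathrm{grad}'u$. Expanding $\mathcal L_f$, one finds that it differs from a real, formally self-adjoint fourth-order operator by a single first-order term whose coefficient is a multiple of $\mathrm{grad}\,s_{\tilde g}$, exactly as the unweighted Lichnerowicz operator differs from a real operator by $\nabla s_g\cdot\nabla u$. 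This is precisely where hypothesis (ii) enters: since $s_{\tilde g}$ is constant, the anomalous first-order term vanishes and $\mathcal L_f$ acquires real coefficients.

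Once $\mathcal L_f$ has real coefficients, its complex kernel is the complexification of its real kernel: writing $u=\varphi+i\psi$ with $\varphi,\psi$ real and $K$-invariant, $\mathcal L_f u=0$ forces $\mathcal L_f\varphi=0$ and $\mathcal L_f\psi=0$ separately. A real potential $\varphi$ in $\ker\mathcal L_f$ yields a Hamiltonian field $J\mathrm{grad}_g\varphi$ that is Killing (this is the content of the real kernel, read off from the weighted Bochner identity together with the Killing property of $K=J\mathrm{grad}_g f$), so $J\mathrm{grad}_g\varphi\in\mathfrak k^K$. Hence $X_u=X_\varphi+iX_\psi$ with $X_\varphi,X_\psi$ each the complexification of an element of $\mathfrak k^K$, giving $\mathfrak g^K=\mathfrak k^K\oplus J\mathfrak k^K$ and thus that $G^K$ is the complexification of $\mathrm{Isom}(M,g)\cap G^K$, in particular reductive.

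The main obstacle is the weighted Bochner computation underlying the last two steps: I must expand $\mathcal L_f$ explicitly and show that, after using the holomorphy and Killing property of $K$ to rewrite every term carrying derivatives of $f$, the only obstruction to real coefficients is proportional to $\mathrm{grad}\,s_{\tilde g}$. Controlling these $f$-derivative terms, and pinning down the correct weight $p$ so that $\mathcal D^{*}\mathcal D$ reproduces precisely the conformally transformed scalar curvature rather than some other curvature combination, is the delicate part, and it is where the cKEM condition genuinely does the work. Restricting throughout to $K$-invariant functions, which is forced because only the centralizer of $K$ preserves the weight $f$, is exactly what confines the conclusion to $G^K$ rather than all of $G$.
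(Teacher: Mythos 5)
Your strategy---parametrize $\mathfrak g^K$ by $K$-invariant potentials, introduce an $f$-weighted Lichnerowicz-type operator whose kernel consists of those potentials, show that the operator has real coefficients exactly when $s_{\widetilde g}$ is constant, and conclude that its complex kernel is the complexification of its real kernel---is indeed the mechanism on which the paper's proof rests (there it is the combination of the definition (\ref{L}), Lemma \ref{Poisson}, and Theorem \ref{Calabi}). But your proposal has a genuine gap at precisely the point that carries all the content: the assertion that the anti-real part of $\mathcal L_f$ is a first-order operator given by the bracket with $s_{\widetilde g}$ is exactly Lemma \ref{Poisson} of the paper, $(\barL - L)u = \frac{i}{2}\{u, S(J,f)\}$, and you do not prove it; you explicitly set it aside as ``the delicate part.'' Everything else in your outline (conjugation-invariance of the kernel once the operator is real, real potentials giving Killing fields, reductiveness from compactness of the isometry group, choosing $K$-invariant potentials for elements of $\mathfrak g^K$) is routine; this one identity \emph{is} the theorem.

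Moreover, as literally set up, your framework cannot produce that identity. The paper's operator (\ref{L}) uses two different weights: $f^{-2m-1}$ on the function side and $f^{-2m+1}$ on the Hessian side. Your $\mathcal L_f = \mathcal D^{*}\mathcal D$, with a single weight $f^{-p}$ governing the adjoint, is a different operator, and a scaling check shows the difference is fatal: replacing $f$ by $cf$ (a constant $c>0$) leaves the equal-weight operator unchanged, while $s_{(cf)^{-2}g} = c^{2}s_{f^{-2}g}$ rescales; hence the reality defect of the equal-weight operator cannot equal $\frac{i}{2}\{u,s_{\widetilde g}\}$ (with the paper's weights, $L$ rescales by $c^{2}$, consistently), and constancy of $s_{\widetilde g}$ would then fail to make the operator real. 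Note also that the paper never performs the pointwise Bochner-type expansion you anticipate: Lemma \ref{Poisson} is obtained softly from the Donaldson-Fujiki picture, namely Theorem \ref{AM} shows the $L^{2}(f^{-2m-1})$-dual of $S(J,f)$ is an equivariant moment map for $\mathrm{Ham}(M)^K$ on $Z^K$ (Proposition \ref{AM2}), and equivariance (Lemma \ref{equivariance}) yields the bracket identity with no tensor computation. If you wish to keep your fixed-metric route, you must actually carry out the weighted integration by parts with the correct pair of weights; this can be done (cf. \cite{AM}, \cite{Lahdili17}), but it cannot be waved through, and with a single weight it is false.
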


\begin{rem}\label{isometry} 
It is shown in \cite{AM} that, under the conditions (i) and (ii), 
$\mathrm{Isom}(M,\widetilde g) \cap \mathrm{Aut}(M,\Omega)$ is a subgroup of 
$\mathrm{Isom}(M,g) \cap \mathrm{Aut}(M,\Omega)$, and 
the Killing vector field 
$K$ is in the center of the Lie algebra of
$\mathrm{Isom}(M,\widetilde g) \cap \mathrm{Aut}(M,\Omega)$. 
The proof is as follows. 
Since $g = f^2\widetilde{g}$, $\mathrm{Isom}(M,\widetilde g) \cap \mathrm{Aut}(M,\Omega)$ acts as conformal transformations of
$(M,g)$. But, since $g$ is K\"ahler, $\mathrm{Isom}(M,\widetilde g) \cap \mathrm{Aut}(M,\Omega)$ acts on $(M,g)$ as homotheties.
Since $\mathrm{Aut}(M,\Omega)$ preserves $\Omega$, $\mathrm{Isom}(M,\widetilde g) \cap \mathrm{Aut}(M,\Omega)$ acts as isometries of $(M,g)$.
This proves the first statement. Further, again since $g = f^2\widetilde{g}$, $\mathrm{Isom}(M,\widetilde g) \cap \mathrm{Aut}(M,\Omega)$
preserves $f$. This implies $K$ is in the center of $\mathrm{Isom}(M,\widetilde g) \cap \mathrm{Aut}(M,\Omega)$.
\end{rem}

When $f$ is a constant function and $(M,g)$ has a constant scalar curvature, Theorem \ref{main thm} is a part of
Lichnerowicz-Matsushima theorem (\cite{matsushima57}, \cite{Lic}) 
which asserts the reductiveness of the full automorphism
group.

As in the case of the problem finding constant scalar curvature K\"ahler metrics, the Calabi type set-up
fixing an integrable complex structure and varying K\"ahler forms in a fixed K\"ahler class can be turned
through Moser's theorem 
into Donaldson-Fujiki type set-up fixing a symplectic form and varying integrable almost complex structures.

Let $(M, J_0, \omega)$ be a compact K\"ahler manifold, $\mathfrak g$ be the reduced Lie algebra as above,
and $T$ a maximal torus in the reduced automorphism group $G$. 
We fix a Hamiltonian Killing vector field $K \in \mathfrak \mathrm{Lie}(T)$. 
Let $T_K$ be the subtorus obtained as the closure in $T$ of
$\{\exp(tK)\ |\ t\in \bfR \}$. We set
\begin{eqnarray*}
 \mathfrak g^K &:=& \{ X \in \mathfrak g\ |\ [K,X] = 0\}\\
 &=& \{ X \in \mathfrak g\ |\ X = \mathrm{grad}'u \text{\ for\ some\ } u \in C_\bfC^\infty(M),\ Ku = 0\}.
\end{eqnarray*}
Now we consider $\omega$ as a fixed symplectic form on $M$. 
Then we can choose a fixed positive Hamiltonian function $f$ of $K$ by adding a 
positive constant if necessary. 
We may define
$$
C^\infty(M)^K := \{ u \in C^\infty(M)\ |\ Ku = 0\}, 
$$
$$
C_\bfC^\infty(M)^K := \{ u \in C_\bfC^\infty(M)\ |\ Ku = 0\},
$$
$$
C^\infty(M)^K_0 := \{ u \in C^\infty(M)^K\ |\ \int_M u\ f^{-2m-1}\omega^m = 0\}, 
$$
$$
C_\bfC^\infty(M)^K_0 := \{ u \in C_\bfC^\infty(M)^K\ |\ \int_M u\ f^{-2m-1}\ \omega^m = 0\}.
$$
We denote by $\mathrm{Ham}(M)^K$ the group of Hamiltonian diffeomorphisms for 
$C^\infty(M)^K$ or equivalently $C_0^\infty(M)^K$ considered as Hamiltonian functions.
Let $Z^K$ be the space of all $T_K$-invariant $\omega$-compatible integrable almost complex structures $J
\in C^\infty(\mathrm{End}(TM))$. 
Here, $J$ is said to be $\omega$-compatible if the following two conditions are satisfied:\\
(a) $\omega(JX,JY) = \omega(X,Y)$,\\
(b) $g_J := \omega (\cdot,J\cdot)$ is positive definite.\\
By (b),  $g_J$ is a $T_K$-invariant K\"ahler metric for each $J \in Z^K$.

For any fixed $J \in Z^K$ we have the decomposition
$$ T^\ast M \otimes \bfC = T_J^{\ast\prime}M \oplus T_J^{\ast\prime\prime}M$$
into type $(1,0)$-part and type $(0,1)$-part. Naturally $T_J^{\ast\prime\prime}M = \overline{T_J^{\ast\prime}M}$.
For another complex structure $J^\prime \in Z^K$ we also have 
$$ T^\ast M \otimes \bfC = T_{J^\prime}^{\ast\prime}M \oplus T_{J^\prime}^{\ast\prime\prime}M.$$
If $J^\prime$ is close to $J$, $T_{J^\prime}^{\ast\prime}M$ is expressed as a graph over $T_J^{\ast\prime}M$ as
$$ T_{J^\prime}^{\ast\prime}M = \{ \alpha + v(\alpha)\ |\ \alpha \in T_J^{\ast\prime}M \}$$
where 
$$v \in C^\infty(\mathrm{End}(T_J^{\ast\prime}M,T_J^{\ast\prime\prime}M)) \cong C^\infty(T_J^{\prime} M
\otimes T_J^{\ast\prime\prime}M) \cong C^\infty(T_J^{\prime} M
\otimes T_J^{\prime}M)$$
and the last isomorphism uses the K\"ahler metric. As an element of the last term, $v$ is in the 
symmetric part $C^\infty(\mathrm{Sym}(T_J^{\prime} M \otimes T_J^{\prime}M))$, see the proof of Lemma 2.1 in 
\cite{futaki06}.  So, we have
$$ T_J Z^K \subset C^\infty(\mathrm{Sym}(T_J^{\prime} M \otimes T_J^{\prime}M))_\bfR,$$
the last term being the underlying real vector space of $C^\infty(\mathrm{Sym}(T_J^{\prime} M \otimes T_J^{\prime}M))$.
The $L^2$-inner product with respect to the volume form $f^{-2m+1}\omega^m$ gives a K\"ahler structure on $Z^K$.
If $J^\prime = J + \delta J$ and 
$$\alpha + \delta\alpha \in T_{J}^{\ast\prime}M \oplus T_{J}^{\ast\prime\prime}M$$
is in $C^\infty(T_{J^\prime}^{\ast\prime}M)$, then
$$ (J + \delta J)(\alpha + \delta\alpha) = i(\alpha + \delta\alpha),\ \  J\alpha = i\alpha,\ \ J\delta\alpha = -i\delta\alpha.$$
Thus, up to the first order, 
\begin{eqnarray}
(\delta J)\alpha &\equiv& 2i\delta\alpha = -2J\delta\alpha,\\
J^{-1}\delta J \alpha &\equiv& -2\delta\alpha.\label{variation}
\end{eqnarray}
If $J(t)$ is any smooth curve in $Z^K$ for $t \in (-\epsilon,\epsilon)$ with $J(0) = J$, (\ref{variation}) shows that we may regard
$$ J^{-1}\dot{J} \in C^\infty(\mathrm{End}(T_J^{\ast\prime}M,T_J^{\ast\prime\prime}M))_\bfR.$$
Here $\dot{J}$ denotes the derivative of $J(t)$ at $t=0$. 
But (\ref{variation}) also shows
\begin{eqnarray}
J^{-1}\delta J \overline{\alpha } &\equiv& -2\overline{\delta\alpha}.\label{variation2}
\end{eqnarray}
Thus, as a real tensor field, $J^{-1}\dot{J} $ is in the form of 
\begin{eqnarray}
J^{-1}\dot{J} = 2\Re (v^i{}_\bark \frac{\partial}{\partial z^i} \otimes d\barz^k).
\end{eqnarray}
The corresponding curve of K\"ahler metrics $g(t) =
\omega(\cdot,J(t)\cdot)$ may be expressed in the matrix form with respect to local coordinates as
$$ g(t) = \omega J(t).$$
We thus have
\begin{equation}\label{derivative1}
g^{-1}\dot{g} = J^{-1}\omega^{-1}\omega \dot{J} = J^{-1}\dot{J},
\end{equation}
and
\begin{equation}\label{derivative2}
(g^{-1})^\cdot = - g^{-1}\dot{g}g^{-1} = - J^{-1}\dot{J}g^{-1}.
\end{equation}
\begin{lem}\label{derivative3}
Let $\Theta(t) = \barpartial_t (g(t)^{-1}\partial_t g(t))$ be the curvature form of $g(t)$
expressed in terms of time dependent local holomorphic coordinates. Then
\begin{equation}\label{derivative4}
\frac{d}{dt}|_{t=0} \Theta(t) = d^\nabla \nabla (J^{-1}\dot{J}). 
\end{equation}
\end{lem}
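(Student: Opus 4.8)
The plan is to differentiate the defining expression directly, treating $\theta(t):=g(t)^{-1}\partial_t g(t)$ as the Chern connection matrix of $(g(t),J(t))$ in $J(t)$-holomorphic coordinates, so that $\Theta(t)=\barpartial_t\theta(t)$ is genuinely its curvature. Because each $J(t)$ is integrable and $\omega$-compatible, the Chern connection coincides with the Levi-Civita connection, and I may use $\nabla J=0$ and the K\"ahler identities at $t=0$. Applying $\frac{d}{dt}|_{t=0}$ and the product rule produces three kinds of terms: those coming from the variation of the metric matrix $g$, and those coming from the variations of the two operators $\partial_t$ and $\barpartial_t$, whose $t$-dependence cannot be ignored since the holomorphic coordinates move with $J(t)$.

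First I would collect the metric terms. Writing $a:=g^{-1}\dot g = J^{-1}\dot J = 2\Re(v^i{}_\bark\partial_i\otimes d\barz^k)$ by (\ref{derivative1}), and using (\ref{derivative2}) for $(g^{-1})^\cdot$, the variation of $g^{-1}\partial_t g$ with the coordinate operators momentarily frozen telescopes into the $(1,0)$-covariant derivative $\nabla' a$, exactly as in the classical computation of the variation of the Chern connection form. Taking $\barpartial$ of this gives the $(1,1)$-type contribution $\barpartial\nabla' a$ to $\dot\Theta$.

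Next I would feed in the variations of $\partial_t$ and $\barpartial_t$. These are precisely governed by (\ref{variation}) and (\ref{variation2}), which express the infinitesimal rotation of the splitting of the complexified cotangent bundle into $(1,0)$- and $(0,1)$-parts in terms of $J^{-1}\dot J$. Substituting them supplies the complementary $(0,1)$-covariant derivative in the connection term and the complementary $(2,0)$- and $(0,2)$-pieces in the curvature term; after using integrability (vanishing Nijenhuis tensor) and the K\"ahler condition to recognize the cross terms as covariant derivatives rather than bare derivatives of $J$, the partial operators are promoted to the full $\nabla$ and $d^\nabla$, yielding $\frac{d}{dt}|_{t=0}\Theta(t)=d^\nabla\nabla(J^{-1}\dot J)$.

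The step I expect to be the main obstacle is the honest bookkeeping of the two operator variations and their interaction with the metric terms. A computation that freezes the coordinate system only ever sees $\partial$- and $\barpartial$-pieces; the entire content of the lemma is that the genuine $t$-derivative, which must differentiate the complex structure itself through (\ref{variation}) and (\ref{variation2}), reorganizes these into the covariantly natural objects $\nabla(J^{-1}\dot J)$ and $d^\nabla\nabla(J^{-1}\dot J)$. Verifying that every leftover term cancels by the K\"ahler identities is where the real work lies, and it is this bookkeeping---carried out in the finite-dimensional model by Wang \cite{Lijing06} and adapted in \cite{futaki07.1}---that I would follow.
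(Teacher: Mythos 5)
Your first step agrees with the paper's: the variation of the connection form is the \emph{full} covariant derivative $\nabla(J^{-1}\dot J)$, the $(0,1)$-part being supplied by the rotation of the type decomposition. The paper obtains this in one line by working at a point $p$ in normal coordinates for $g(0)=g$ (so $dg=0$ at $p$), where by (\ref{derivative1}) one has $\dot\theta = g^{-1}\nabla\dot g = \nabla(g^{-1}\dot g)=\nabla(J^{-1}\dot J)$; the normal coordinates kill both the $(g^{-1})^{\cdot}\,\partial g$ term and the terms where the variation of $\partial_t$ hits $g$.

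The gap is in your second step. You propose to differentiate $\Theta(t)=\barpartial_t\theta_t$ directly, which forces you to evaluate the term where $\tfrac{d}{dt}\barpartial_t$ acts on $\theta$; this term does \emph{not} vanish even at $p$ (there $\theta=0$ but $d\theta\neq 0$, and the operator variation acts on $d\theta$), and you explicitly leave the resulting bookkeeping undone, asserting that integrability and the K\"ahler identities make all leftover terms reorganize into $d^\nabla\nabla(J^{-1}\dot J)$ and deferring to \cite{Lijing06}, \cite{futaki07.1}. Since you yourself identify that reorganization as ``the entire content of the lemma,'' what remains is an outline, not a proof. The idea that closes the gap cheaply, and which your proposal is missing, is the structure equation: because $\theta=g^{-1}\partial g$ satisfies $\partial\theta=-\theta\wedge\theta$, one can write $\Theta = d\theta+\theta\wedge\theta$ with the full exterior derivative $d$, and then the $t$-derivative is purely formal,
\begin{equation*}
\dot\Theta \;=\; d\dot\theta + \dot\theta\wedge\theta + \theta\wedge\dot\theta \;=\; d^\nabla\dot\theta ,
\end{equation*}
with no reference whatever to the moving splitting of $T^\ast M\otimes\bfC$. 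The varying complex structure then enters only through $\dot\theta=\nabla(J^{-1}\dot J)$, and the lemma follows at once. If you wish to salvage your direct route, replace $\barpartial_t$ by $d-\partial_t$ and use $\partial_t\theta_t=-\theta_t\wedge\theta_t$; this reduces your computation to exactly the argument above.
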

\begin{proof}
Consider $\frac{d}{dt}|_{t=0} \Theta(t)$ with normal coordinates with respect to $g(0) = g$ at $p \in M$
so that the derivative 
$dg = 0$ at $p$.
Let $\theta_t = g_t^{-1}\partial_t g_t$ be the connection form. Then at $p$ we have from (\ref{derivative1}) 
$$ \dot\theta = g^{-1}\nabla\dot{g} = \nabla (g^{-1}\dot{g}) = \nabla(J^{-1}\dot{J}).$$
On the other hand, from $\Theta = d\theta + \theta \wedge \theta$, we have
 $$ (d\theta + \theta \wedge \theta)^\cdot = d\dot{\theta} + \dot{\theta}\wedge\theta + \theta\wedge\dot{\theta}
 = d^\nabla \dot{\theta}.$$
 Thus 
 $$\dot{\Theta} = d^{\nabla}\dot{\theta} = d^\nabla \nabla(J^{-1}\dot{J}).$$
\end{proof}

Let $S(J,f)$ be the scalar curvature of the conformal
metric $f^{-2}g_J$. We often write $S_{J,f}$ and $g_{J,f}$ instead of $S(J,f)$ and $f^{-2}g_J$
when these are more convenient. The following theorem is due to Apostolov-Maschler \cite{AM}, 
but we express and prove it in the form which fits to the aim of the present paper.
\begin{thm}[\cite{AM}] \label{AM} For any smooth curve $J(t)$, $-\epsilon < t < \epsilon$, in $Z^K$ with 
\begin{eqnarray}
J^{-1}\dot{J} = 2\Re (v^i{}_\bark \frac{\partial}{\partial z^i} \otimes d\barz^k)  \in T_J Z^K,
\end{eqnarray}
we put $v := v^i{}_\bark\ \frac{\partial}{\partial z^i} \otimes d\barz^k$. Then for any smooth function $h \in C^\infty(M)$
we have
\begin{equation}
\frac{d}{dt}|_{t=0} \int_M S(J(t),f) h f^{-2m-1}\omega^m = 2 \Re\ \int_M (\nabla_i\nabla_j h)\ v^{ij}\ f^{-2m+1}\ \omega^m
\end{equation}
where $v^{ij} = g^{j\bark}v^i{}_\bark$. 
\end{thm}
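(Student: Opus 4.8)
The plan is to start from the pointwise conformal transformation law for the scalar curvature. Writing $n=2m$ for the real dimension and $\Delta=\mathrm{div}\,\grad$, one has
$$S(J,f)=f^2 s_{g_J}+2(n-1)f\,\Delta_{g_J}f-n(n-1)|\nabla f|^2_{g_J},$$
where $s_{g_J}$, $\Delta_{g_J}$, $|\nabla f|^2_{g_J}$ are the scalar curvature, Laplacian and squared gradient norm of the K\"ahler metric $g_J$. Since $\omega$, hence $\omega^m$, and the Hamiltonian potential $f$ are fixed along the curve $J(t)$, the derivative hits only $S(J,f)$; so I would first compute $\delta S:=\frac{d}{dt}|_{t=0}S(J(t),f)$ pointwise and then integrate $\int_M(\delta S)\,h\,f^{-2m-1}\omega^m$, with the aim of collapsing everything into $2\Re\int_M v^{ij}\nabla_i\nabla_j h\,f^{-2m+1}\omega^m$.

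Two structural facts drive the computation. First, because we vary $J$ with $\omega$ fixed, the endomorphism $J^{-1}\dot J$ anticommutes with $J$, so by (\ref{derivative2}) the mixed inverse metric is stationary, $\delta g^{i\barj}=0$, while the pure parts vary as $\delta g^{ij}=-v^{ij}$ together with its conjugate. Second, since $K=J\grad_{g_J}f$ is holomorphic for every $J\in Z^K$, the potential $f$ has vanishing $(2,0)$- and $(0,2)$-Hessian, $\nabla_i\nabla_j f=0=\nabla_\bari\nabla_\barj f$. These facts annihilate the naive terms: in $\delta(\Delta f)=(\delta g^{ab})\nabla_a\nabla_b f-g^{ab}(\delta\Gamma^c_{ab})\nabla_c f$ the Hessian term vanishes (its mixed part because $\delta g^{i\barj}=0$, its pure parts because $\nabla_i\nabla_j f=0$), leaving only the Christoffel variation. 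Using $\delta\Gamma^c_{ab}=\tfrac12 g^{cd}(\nabla_a\delta g_{bd}+\nabla_b\delta g_{ad}-\nabla_d\delta g_{ab})$ with $\delta g_{\barj\barl}=v_{\barj\barl}:=g_{i\barj}v^i{}_\barl$ gives $g^{ab}\delta\Gamma^k_{ab}=\nabla_i v^{ik}$, hence $\delta(\Delta f)=-2\Re[(\nabla_i v^{ik})f_k]$, while $\delta|\nabla f|^2=-2\Re(v^{ij}f_i f_j)$, where $f_i=\partial_i f$. For the K\"ahler curvature term I would invoke Lemma \ref{derivative3}, $\delta s=g^{k\barl}\mathrm{tr}\,\dot\Theta_{k\barl}$ with $\dot\Theta=d^\nabla\nabla(J^{-1}\dot J)$, which after integration by parts yields the classical Donaldson--Fujiki identity
$$\int_M(\delta s)\,\phi\,\omega^m=2\Re\int_M v^{ij}\nabla_i\nabla_j\phi\,\omega^m,\qquad \phi\in C^\infty(M),$$
exactly as in Wang \cite{Lijing06} and \cite{futaki07.1}.

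Assembling the pieces, $\int_M(\delta S)\,h\,f^{-2m-1}\omega^m$ splits as $I_1+I_2+I_3$, where $I_1=\int_M(\delta s)(h f^{-2m+1})\omega^m$, and $I_2$, $I_3$ come from $2(n-1)f\,\delta(\Delta f)$ and $-n(n-1)\,\delta|\nabla f|^2$. Applying the displayed identity to $I_1$ with $\phi=h f^{-2m+1}$ and expanding $\nabla_i\nabla_j(h f^{-2m+1})$ by Leibniz — crucially using $\nabla_i\nabla_j f=0$, so that $\nabla_i\nabla_j(f^{-2m+1})=n(n-1)f^{-2m-1}f_i f_j$ — produces the target term $2\Re\int_M v^{ij}\nabla_i\nabla_j h\,f^{-2m+1}\omega^m$ together with two corrections proportional to $v^{ij}h_i f_j\,f^{-2m}$ and $v^{ij}f_i f_j\,h f^{-2m-1}$. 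Integrating $I_2$ by parts to move $\nabla_i$ off $v^{ik}$ and again using $\nabla_i\nabla_k f=0$ shows that $I_2$ contributes exactly the opposite of the first correction and twice the negative of the second, while $I_3$ supplies the second correction a further time; with the coefficients $4(1-n)+4(n-1)=0$ and $2n(n-1)-4n(n-1)+2n(n-1)=0$ all corrections cancel, leaving only the target term.

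The hard part is precisely this final bookkeeping: the conformal corrections $f\Delta f$ and $|\nabla f|^2$ generate terms that individually bear no resemblance to the right-hand side and collapse only after the integration by parts redistributes the powers of $f$ — this is what converts the weight $f^{-2m-1}$ on the left into $f^{-2m+1}$ on the right. Making every numerical coefficient (the factors $2(n-1)$ and $n(n-1)$, and those from differentiating $f^{-2m+1}$) match so that the cancellation is exact is the delicate step, and at each stage one must invoke the Killing identity $\nabla_i\nabla_j f=0$ and the stationarity $\delta g^{i\barj}=0$ to discard the terms that would otherwise obstruct the collapse.
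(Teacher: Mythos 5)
Your proposal is correct: the conformal transformation law you start from is exactly the paper's formula $S_{J,f}=2\tfrac{2m-1}{m-1}f^{m+1}\Delta_J(f^{-m+1})+S_Jf^2$ rewritten with the geometer's Laplacian, and all the ingredients you invoke check out — $\delta g^{i\barj}=0$ with $\delta g^{ij}=-v^{ij}$, the Killing identity $\nabla_i\nabla_jf=0$, the trace $g^{ab}\delta\Gamma^k_{ab}=\nabla_iv^{ik}$, $\delta|\nabla f|^2=-2\Re(v^{ij}f_if_j)$, the Donaldson--Fujiki identity for $\delta s$, and the final cancellation pattern (coefficients in ratio $1:-1$ for the $v^{ij}h_if_j f^{-2m}$ terms and $1:-2:1$ for the $v^{ij}f_if_j\,h f^{-2m-1}$ terms, each summing to zero).

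The difference from the paper is one of ordering rather than of strategy, but it changes what must be computed. The paper first integrates by parts in the fixed volume form, rewriting
\begin{equation*}
\int_M S_{J,f}\,\frac{h}{f^{2m+1}}\,\omega^m
= 2m(2m-1)\int_M \frac{h}{f^{2m+1}}\,g_J^{-1}(df,df)\,\omega^m
- 2(2m-1)\int_M f^{-2m}g_J^{-1}(dh,df)\,\omega^m
+ \int_M S_J\,h\,f^{-2m+1}\,\omega^m,
\end{equation*}
and only then differentiates in $t$. Since the $t$-dependence of the first two terms sits entirely in $g_J^{-1}$, the paper needs nothing beyond $(g^{-1})^{\cdot}=-J^{-1}\dot{J}g^{-1}$ and Lemma \ref{derivative3}; it never computes $\delta\Gamma$ or $\delta(\Delta_{g_J} f)$. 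You differentiate pointwise first, which forces you to supply the Christoffel variation — an extra (correct) computation — and then to redistribute the powers of $f$ by integration by parts afterwards. Both routes end at the same bookkeeping; the paper's ordering is a little slicker because the second derivatives of $f$ are eliminated once and for all before any variation is taken, while yours makes more transparent where each correction term originates and why the weights $f^{-2m-1}$ and $f^{-2m+1}$ appear where they do.
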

\begin{proof}
The Ricci form $\mathrm{Ric}_J$ of $g_J$ is given by
$$ \mathrm{Ric}_J = i \mathrm{tr} \Theta_J.$$
Thus the scalar curvature $S_J$ of $g_J$ satisfies
$$ S_J \omega^m = 2\mathrm{Ric}_J \wedge \frac{\omega^{m-1}}{(m-1)!} = 2i\ \mathrm{tr} \Theta_J \wedge \frac{\omega^{m-1}}{(m-1)!}.$$
In Lemma \ref{derivative3}, the $1$-form $\dot{\theta}$ with values in $\mathrm{End}(T^\prime M)$ is written as
$$ \Re (\nabla_j\,v^i{}_\bark\ (dz^j\otimes\frac{\partial}{\partial z^i}) \otimes d\barz^k).$$
Thus we have
$$ \dot{S}_J = 2\Re\ \nabla^\barj\nabla_i v^i{}_\barj = 2\Re\ \nabla_j\nabla_i v^{ij}.$$
The scalar curvature $S_{J,f}$ of $g_{J,f}$ and the scalar curvature $S_J$ of $g_J$ are related by
$$ S_{J,f} = 2\frac{2m-1}{m-1} f^{m+1} \Delta_J (f^{-m+1}) + S_J f^2$$
where $\Delta_J = d_{g_{J}}^\ast d$ is the Hodge Laplacian, see e.g. \cite{B}. Then
\begin{eqnarray}
\int_M S_{J,f} \frac{h}{f^{2m+1}} \omega^m &=& 2m(2m-1)\int_M \frac{h}{f^{2m+1}} g_J^{-1}(df,df)\ \omega^m\nonumber \\
&& - 2(2m-1)\int_M f^{-2m}g_J^{-1}(dh,df) \omega^m \\
&& + \int_M S_J h\, f^{-2m+1} \omega^m.\nonumber 
\end{eqnarray}
Noting 
$$\nabla_i\nabla_j f = \overline{\nabla_\bari\nabla_\barj f} = 0$$ 
since $K$ is a holomorphic vector field, and 
using (\ref{derivative2}) and $v^{ij} = v^{ji}$ we can compute the derivative
\begin{eqnarray}
\frac{d}{dt}|_{t=0}\int_M S_{J,f} \frac{h}{f^{2m+1}} \omega^m &=& -2m(2m-1)\int_M \frac{h}{f^{2m+1}} g^{-1}
({}^t(J^{-1}\dot{J})df,df)\ \omega^m\nonumber \\
&& + 2(2m-1)\int_M f^{-2m}g^{-1}({}^t(J^{-1}\dot{J})dh,df) \omega^m \nonumber \\
&& + \int_M \dot{S}_J h\, f^{-2m+1} \omega^m.\nonumber \\
&=& 2\Re [ -2m(2m-1)\int_M \frac{h}{f^{2m+1}} g^{k\barj}v^i{}_\barj \nabla_i f \nabla_k f \omega^m\nonumber\\
&& + 2(2m-1)\int_M f^{-2m}g^{k\barj}v^i{}_\barj \nabla_i h \nabla_j f \omega^m\nonumber\\
&& + \int_M \nabla^\barj\nabla_i v^i{}_\barj\ h\ f^{-2m+1} \omega^m]\nonumber\\
&=& 2\Re [\int_M \nabla_i\nabla_j h\ v^{ij}\ f^{-2m+1} \omega^m]. \nonumber
\end{eqnarray}
This completes the proof of Theorem \ref{AM}.
\end{proof}

\begin{rem}\label{Futaki1}
Recall we fixed $K \in Lie(T)$, the symplectic form $\omega$ and the positive Hamiltonian function $f$ of $K$
with respect to $\omega$. Let $\mathfrak g_\bfR$ be the subalgebra of $\mathfrak g$ consisting of all $X = \mathrm{grad} u$
with real smooth function $u$. 
The linear map $Fut_f : \mathfrak g_\bfR \to \bfR$ defined by
\begin{equation}
Fut_f (X) = \int_M S(J,f)\, u_X\, \frac{\omega^m}{f^{2m+1}}
\end{equation}
is independent of the choice of $J \in Z$ where $\mathrm{grad}\, u_X = X$ with
\begin{equation}\label{normalization}
\int_M u_X\, f^{-2m-1}\omega^m = 0.
\end{equation}
This follows by taking $h = u_X$ in Theorem \ref{AM}.
\end{rem}

\begin{rem}\label{Futaki2}
By taking $h = 1$ in Theorem $\mathrm{\ref{AM}}$ we see
\begin{equation}
c_{J,f} = \int_M S(J,f)\, f^{-2m-1}\omega^m/\int_M f^{-2m-1}\omega^m
\end{equation}
is independent of $J \in Z$. Thus, we may remove the normalization $\mathrm{(\ref{normalization})}$ and
define $Fut_f$ by
\begin{equation}
Fut_f (X) = \int_M (S(J,f) - c_{J,f})\, u_X\, \frac{\omega_g^m}{f^{2m+1}}.
\end{equation} 
In the Calabi type setup, when $K$ is fixed, $f$ varies as $\omega_g$ varies in $\Omega$ 
in the unique manner with a normalization
\begin{equation}\label{a}
\int_M f\, \omega_g^m = a.
\end{equation}
When this normalization is satisfied we shall write $f$ as $f_{K,g,a}$. 
Then 
$Fut_{K,a} : \mathfrak g_\bfR \to \bfR$ defined by
\begin{equation}
Fut_{K,a} (X) = \int_M (S(\omega_g,f_{K,g,a}) - c_{\Omega,K,a})\, u_X\, \frac{\omega_g^m}{f_{K,g,a}^{2m+1}}
\end{equation} 
with
\begin{equation}\label{eq:3.6}
c_{\Omega,K,a}:=\dfrac
{\displaystyle{\int_M S(\omega_g,f_{K,g,a}) \left(\frac{1}{f_{K,g,a}}\right)^{2m+1}
\frac{\omega^m}{m!}}}
{\displaystyle{\int_M\left(\frac{1}{f_{K,g,a}}\right)^{2m+1}
\frac{\omega^m}{m!}}},
\end{equation}
is independent of the choice of $\omega_g \in \Omega$ where $S(\omega_g,f)$ is the
scalar curvature of $\Tilde g = f^{-2}g$. If there exists a K\"ahler metric $g$ with its K\"ahler form $\omega_g$ 
in $\Omega$ and with $S(\omega_g,f)$
constant then $Fut_{K,a} = 0$. Namely, $Fut_{K,a}$ is an
obstruction to the existence of conformally Einstein-Maxwell K\"ahler metric in the K\"ahler class $\Omega$.
\end{rem}

Since the Hermitian inner product of the tangent space $T_J Z^K$ of $Z^K$ at $J$ is given by the
$L^2$-inner product 
\begin{equation}\label{Hermitian}
(\lambda,\nu)_{L^2(f^{2m+1})} := \int_M \overline{\lambda_{\bari\barj}}\,\nu^{ij}\ f^{-2m+1}\,\omega^m,
\end{equation}
the symplectic structure $\Omega_{J,f}$ on $Z^K$ is given at $J$ by
\begin{eqnarray}
\Omega_{J,f}(\lambda,\nu) &=& \Re (\lambda,\sqrt{-1}\nu) \nonumber \\
&=& 
\Re \int_M \overline{\lambda}_{ij}\,\sqrt{-1}\nu^{ij}\ f^{-2m+1}\,\omega^m. \label{symplectic}
\end{eqnarray}
\begin{prop}[\cite{AM}]\label{AM2}
The $(\cdot,\cdot)_{L^2(f^{-2m-1})}$-dual of the scalar curvature $S(J,f)$ gives an equivariant moment map
for the action of the group $\mathrm{Ham}(M)^K$ of Hamiltonian diffeomorphisms generated by $C_0^\infty(M)^K$ where
$$ (\varphi, \psi)_{L^2(f^{-2m-1})} = \int_M \varphi\,\psi\,f^{-2m-1}\,\omega^m. $$
More precisely, the equality (\ref{equimoment}) below holds and the left hand side is equivariant under
the action on $Z^K$ of the group of Hamiltonian diffeomorphisms generated by $C_0^\infty(M)^K$
\end{prop}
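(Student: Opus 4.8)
The plan is to verify the Donaldson--Fujiki moment map identity directly, evaluating one side by Theorem \ref{AM} and the other by an explicit description of the infinitesimal action of $\mathrm{Ham}(M)^K$. For $\varphi \in C_0^\infty(M)^K$ let $X_\varphi$ be the Hamiltonian vector field of $\varphi$ with respect to the fixed symplectic form $\omega$; since $\varphi$ is $T_K$-invariant, the flow of $X_\varphi$ lies in $\mathrm{Ham}(M)^K$ and acts on $Z^K$, and I write $\delta_\varphi J \in T_J Z^K$ for the corresponding fundamental vector field (the Lie derivative $\mathcal L_{X_\varphi}J$, up to sign). Identifying $\mathrm{Lie}(\mathrm{Ham}(M)^K)^\ast$ with $C_0^\infty(M)^K$ via $(\cdot,\cdot)_{L^2(f^{-2m-1})}$, the claim is that $\mu(J):=S(J,f)$ (well defined modulo the additive constant $c_{J,f}$, which is $J$-independent by Remark \ref{Futaki2}) is an equivariant moment map. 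Concretely, the identity to be established is
\begin{equation}\label{equimoment}
\frac{d}{dt}|_{t=0}\,\big(S(J(t),f),\varphi\big)_{L^2(f^{-2m-1})} = \Omega_{J,f}(\delta_\varphi J,\nu),
\end{equation}
for every smooth curve $J(t)$ in $Z^K$ with $J(0)=J$ and tangent vector $\nu$ (written $J^{-1}\dot J = 2\Re(v^i{}_\bark\,\partial_{z^i}\otimes d\barz^k)$, $\nu^{ij}=g^{j\bark}v^i{}_\bark$ as in Theorem \ref{AM}).

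The main step is to compute $\delta_\varphi J$ through the metric isomorphism $T_J Z^K \subset C^\infty(\mathrm{Sym}(T_J^\prime M\otimes T_J^\prime M))_\bfR$ and identify its symmetric representative $\lambda_{ij}$ with a universal (purely $\sqrt{-1}$-and-sign) multiple of the $(2,0)$-Hessian $\nabla_i\nabla_j\varphi$. This is exactly the computation underlying the unweighted Fujiki--Donaldson theorem, carried out for instance in \cite{futaki06}, \cite{futaki07.1}: the obstruction to $X_\varphi=\mathrm{grad}^\prime\varphi + \overline{\mathrm{grad}^\prime\varphi}$ being holomorphic is the complex-antilinear part of $\nabla(\mathrm{grad}^\prime\varphi)$, i.e. the symmetric tensor $\nabla_\bari\nabla_\barj\varphi = \overline{\nabla_i\nabla_j\varphi}$, so $\delta_\varphi J$ corresponds to $\pm 2\sqrt{-1}\,\nabla_i\nabla_j\varphi$. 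It is important here that the deformation depends only on the bare $\omega$-Hamiltonian structure, not on $f$; the weights live solely in the inner products. Reality and $K$-invariance of $\varphi$ guarantee $\nabla_i\nabla_j\varphi$ is $T_K$-invariant, so $\delta_\varphi J$ genuinely lies in $T_J Z^K$.

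Granting this, I substitute the representative into the symplectic form (\ref{symplectic}). Since $\Omega_{J,f}(\lambda,\nu)=\Re\int_M \overline{\lambda}_{ij}\,\sqrt{-1}\,\nu^{ij}\,f^{-2m+1}\omega^m$, the identification $\lambda_{ij}\leftrightarrow \pm 2\sqrt{-1}\,\nabla_i\nabla_j\varphi$ yields
$$
\Omega_{J,f}(\delta_\varphi J,\nu) = 2\Re\int_M (\nabla_i\nabla_j\varphi)\,\nu^{ij}\,f^{-2m+1}\,\omega^m,
$$
after matching the factor of $\sqrt{-1}$ and fixing the sign convention. On the other hand, Theorem \ref{AM} with $h=\varphi$ gives precisely the same integral for the left side of (\ref{equimoment}), so (\ref{equimoment}) holds. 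For the equivariance claim, take $\phi\in\mathrm{Ham}(M)^K$ generated by $u\in C^\infty(M)^K$; then $Ku=\{f,u\}=0$ forces $\{u,f\}=0$, so $X_u$ preserves $f$ and hence $\phi^\ast f=f$. Combined with the naturality of scalar curvature, $\phi^\ast S(J,f)=S(\phi^\ast J,f)$, which is exactly the $\mathrm{Ham}(M)^K$-equivariance of $J\mapsto S(J,f)$.

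The hard part will be the constant-and-sign bookkeeping in the identification of $\delta_\varphi J$ with the complex Hessian $\nabla_i\nabla_j\varphi$, since everything downstream is then a pairing and a direct appeal to Theorem \ref{AM}. A secondary point requiring care is confirming that the two differently weighted $L^2$-structures are consistently engineered: the pairing $(\cdot,\cdot)_{L^2(f^{-2m-1})}$ on Hamiltonians carries weight $f^{-2m-1}$, while $\Omega_{J,f}$ carries weight $f^{-2m+1}$, and the two extra derivatives in $\nabla_i\nabla_j\varphi$ absorb exactly this $f^{-2}$ discrepancy so that both sides of (\ref{equimoment}) land on the single integral against $f^{-2m+1}\omega^m$. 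Once these normalizations are checked, the proof is complete.
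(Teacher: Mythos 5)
Your proposal is correct and follows essentially the same route as the paper's own proof: the paper likewise computes the fundamental vector field $\chi_u = L_{X_u}J$ via Lemma 2.3 of \cite{futaki06} (identifying it with $2\sqrt{-1}$ times the complex Hessian of $u$, which fixes the sign and constant you left as $\pm$), pairs it against $\Omega_{J,f}$, and matches the result with Theorem \ref{AM} applied to $h=u$, with equivariance following from $T_K$-invariance of the Hamiltonians (hence invariance of $f$) exactly as you argue.
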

\begin{proof}
If $X = X^\prime + X^{\prime\prime}$ is a smooth complex vector field  where $X^\prime$ and 
$X^{\prime\prime}$ are type $(1,0)$ and $(0,1)$ part respectively, then by Lemma 2.3 in 
\cite{futaki06} 
\begin{equation}\label{tangent}
 L_X J = 2\sqrt{-1}\nabla^{\prime\prime}_J X^\prime - 2 \sqrt{-1}\nabla^\prime_J X^{\prime\prime}.
 \end{equation}
In particular, if $X_u$ is the Hamiltonian vector field of $u \in C^\infty_0(M)^K$, we have
$$ L_{X_u} J = 2\sqrt{-1}\nabla_{\bark}u^i \frac{\partial}{\partial z^i} \otimes d\overline{z^k}  - 
2\sqrt{-1} \nabla_k u^\bari \frac{\partial}{\partial \overline{z^i}} \otimes dz^k. $$
Hence 
$$ \chi_u := 4\Re (\sqrt{-1}\nabla_{\bark}u^i \frac{\partial}{\partial z^i} \otimes d\overline{z^k} ) $$
defines a tangent vector in $T_J Z^K$. For $\alpha = \alpha_i dz^i$, we have
$$ \chi_u(\alpha) = 2\sqrt{-1} (\nabla_\barj u^i) \alpha_i d\overline{z^j}.$$ 
If $J^{-1}\dot{J} = v$ then by Theorem \ref{AM} we have
\begin{eqnarray}
\frac{d}{dt}|_{t=0} \int_M S(J(t),f)\, u\, f^{-2m-1}\omega^m &=& 
2 \Re\ \int_M \overline{u_{\bari\barj}}\ v^{ij}\ f^{-2m+1}\ \omega^m\nonumber\\
&=& \Re \int_M \overline{2\sqrt{-1}\,u^i{}_\barj}\ \sqrt{-1}v_\bari{}^j\ f^{-2m+1}\ \omega^m\nonumber\\
&=& \Omega(\chi_u,v).\label{equimoment}
\end{eqnarray}
Further, since $w \in C_0^\infty(M)^K$ is $T_K$-invariant, the scalar curvature $S(J,f)$ defines a moment map on 
$Z^K$ 
equivariant under the action of Hamiltonian diffeomorphisms generated by $X_w$ for any $w \in C_0^\infty(M)^K$. 
\end{proof}
Note in passing that (\ref{tangent}) shows
\begin{equation}\label{Jtangent1}
 L_{JX_u} J = -2\nabla^{\prime\prime}_J X_u^\prime - 2\nabla^\prime_J X_u^{\prime\prime}.
 \end{equation}
 and the corresponding tangent vector $J^{-1}\dot{J} \in T_J Z^k$ is expressed as
 \begin{equation}\label{Jtangent2}
(J^{-1}\dot{J}) \alpha = -2(\nabla_\barj u^i)\alpha_i d\overline{z^j}
 \end{equation}
 for $\alpha = \alpha_i dz^i$.
 

\section{Hessian formula for the Calabi functional}
Consider the Calabi functional $\Phi : Z^K \to \bfR$ defined by
$$ \Phi(J) = \int_M S_{J,f}^2 \omega^m. $$
If $J$ is a critical point of $\Phi$, the K\"ahler metric $g = \omega J$ is called an $f$-extremal K\"ahler metric.
From Theorem \ref{AM} we see
\begin{equation}\label{first}
\frac{d}{dt} \int_M S_{J(t),f}^2 f^{-2m-1} \omega^m = 8 \Re \int_M \overline{\nabla_\bari\nabla^j S_{J,f}}\,
\sqrt{-1}\nabla^i\nabla_\barj u\, f^{-2m+1} \omega^m
 \end{equation}
 when 
 \begin{eqnarray}
 J^{-1}\dot{J} &=& \chi_u \\
 &=& 2\sqrt{-1}\nabla_{\bark}u^i \frac{\partial}{\partial z^i} \otimes d\overline{z^k}  - 
2\sqrt{-1} \nabla_k u^\bari \frac{\partial}{\partial \overline{z^i}} \otimes dz^k. \nonumber
\end{eqnarray}
Thus we obtain the following.
\begin{lem}\label{extremal} The K\"ahler metric $g = \omega J$ is an $f$-extremal K\"ahler metric if and only if 
$\mathrm{grad}^\prime_J S_{J,f}$ is a holomorphic vector field. 
\end{lem}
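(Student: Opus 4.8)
The plan is to read off the first variation of $\Phi$ from Theorem \ref{AM} and Proposition \ref{AM2}, and then to exploit the fact that $S_{J,f}$ is itself the moment map, so that the gradient of $\Phi$ at $J$ is exactly the infinitesimal action generated by $S_{J,f}$ on $Z^K$.

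First I would record the first variation along an \textit{arbitrary} tangent vector $v = J^{-1}\dot J \in T_J Z^K$. Writing the Calabi functional in the weighted form $\Phi(J)=\int_M S_{J,f}^2\,f^{-2m-1}\omega^m$ matching \eqref{first} and differentiating, the product rule gives $\dot\Phi = 2\int_M S_{J,f}\,\dot S_{J,f}\, f^{-2m-1}\omega^m$; applying Theorem \ref{AM} with the fixed function $h = S_{J,f}$ to evaluate $\int_M S_{J,f}\,\dot S_{J,f}\,f^{-2m-1}\omega^m$ and rewriting the result via \eqref{equimoment} in Proposition \ref{AM2}, one obtains
\begin{equation*}
\frac{d}{dt}\Big|_{t=0}\Phi(J(t)) = 2\,\Omega_{J,f}\big(\chi_{S_{J,f}},\, v\big),
\end{equation*}
valid for \textit{every} $v\in T_J Z^K$. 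Here $\chi_{S_{J,f}}$ is the tangent vector of Proposition \ref{AM2} with Hamiltonian potential $u = S_{J,f} - c_{J,f}\in C_\bfC^\infty(M)^K_0$; one subtracts the constant $c_{J,f}$ to make the potential admissible, which does not affect $\chi$ since it depends only on derivatives of $u$. Because $S_{J,f}$ is $T_K$-invariant, $\chi_{S_{J,f}}$ is a genuine element of $T_J Z^K$.

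Next I would characterize criticality. By definition $g=\omega J$ is $f$-extremal iff $\dot\Phi = 0$ for all $v\in T_J Z^K$, i.e. iff $\Omega_{J,f}(\chi_{S_{J,f}}, v) = 0$ for all $v$. Since $\Omega_{J,f}$ is the K\"ahler form of the $L^2$-structure on $Z^K$ it is nondegenerate on $T_J Z^K$, so this forces $\chi_{S_{J,f}} = 0$. Concretely, taking the test direction $v = \sqrt{-1}\,\chi_{S_{J,f}}$, which by \eqref{Jtangent1}--\eqref{Jtangent2} is the tangent vector to $Z^K$ generated by $JX_{S_{J,f}}$ and hence again lies in $T_J Z^K$, one gets $0 = \dot\Phi = 2\,\Omega_{J,f}(\chi_{S_{J,f}},\sqrt{-1}\chi_{S_{J,f}}) = -2\,\|\chi_{S_{J,f}}\|^2_{L^2(f^{2m+1})}$ by \eqref{symplectic}, whence $\chi_{S_{J,f}} = 0$. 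Finally, from the expression $\chi_u = 4\Re(\sqrt{-1}\,\nabla_\bark u^i\, \frac{\partial}{\partial z^i}\otimes d\barz^k)$ with $u^i$ the components of $\mathrm{grad}'u$, and the fact that the metric is parallel, $\chi_{S_{J,f}} = 0$ is equivalent to $\nabla_\bark\nabla_\barj S_{J,f} = 0$, which is precisely $\barpartial(\mathrm{grad}'_J S_{J,f}) = 0$, i.e. that $\mathrm{grad}'_J S_{J,f}$ is holomorphic. The converse implication is immediate from the same first-variation identity.

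I expect the main obstacle to be the second step: passing from ``the derivative of $\Phi$ vanishes in every direction'' to ``$\chi_{S_{J,f}} = 0$''. The subtlety is that $T_J Z^K$ sits inside the space of symmetric tensors cut out by the integrability constraint, so one cannot simply feed an arbitrary symmetric tensor (such as the raw conjugate Hessian of $S_{J,f}$) into the variation formula. The moment-map packaging bypasses this: since $Z^K$ is itself a complex manifold, its complex structure preserves $T_J Z^K$, so $\sqrt{-1}\,\chi_{S_{J,f}}$ is automatically an admissible integrable direction, and nondegeneracy of $\Omega_{J,f}$ finishes the argument. The only points requiring care are verifying that $\chi_{S_{J,f}}$ and $\sqrt{-1}\,\chi_{S_{J,f}}$ genuinely lie in $T_J Z^K$ (this uses $T_K$-invariance of $S_{J,f}$ together with the fact that Hamiltonian flows preserve both integrability and $\omega$-compatibility) and the bookkeeping of the constants in the first-variation identity.
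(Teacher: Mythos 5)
Your proof is correct, and at its core it is the same argument the paper intends: the first variation of $\Phi$ coming from Theorem \ref{AM}, evaluated against the direction generated by $S_{J,f}$ itself. The only real difference is packaging: you route the variation through the moment-map identity (\ref{equimoment}) and the symplectic form $\Omega_{J,f}$, whereas the paper reads the lemma off the tensor formula (\ref{first}); via (\ref{equimoment}) these are literally the same pairing up to constants. Where your write-up is actually more complete than the paper's one-line derivation (``Thus we obtain the following'') is at the crux of the ``only if'' direction. The formula (\ref{first}) is stated only for directions $\chi_u$ with $u$ real, and along those directions the variation is identically zero at every $J \in Z^K$: the right-hand side of (\ref{first}) is $-8\,\mathrm{Im}$ of the Hermitian pairing $(\nabla^{\prime\prime}\nabla^{\prime\prime}S_{J,f},\nabla^{\prime\prime}\nabla^{\prime\prime}u)_{L^2(f^{-2m+1})}=\overline{(u,LS_{J,f})_{L^2(f^{-2m-1})}}$, which is real because $LS_{J,f}=\overline{L}S_{J,f}$ is real (Lemma \ref{Poisson} with $u=S_{J,f}$, as the paper itself notes in the proof of Lemma \ref{first der}); this is just the infinitesimal form of the $\mathrm{Ham}(M)^K$-invariance of $\Phi$. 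So criticality can never be detected along the $\chi_u$ alone; one must rotate to $v=\sqrt{-1}\chi_{S_{J,f}}=L_{JX_{S_{J,f}}}J$ (a multiple of the direction used in Lemma \ref{first der} with $u=S_{J,f}$), which lies in $T_JZ^K$ exactly because $T_JZ^K$ is a complex subspace, and which converts the variation into the sign-definite quantity $-2\|\chi_{S_{J,f}}\|^2$. This rotation is precisely the step you make explicit, so your argument supplies the detail that the paper defers, in effect, to Lemma \ref{first der}; everything else in your proposal (the subtraction of $c_{J,f}$, the $T_K$-invariance of $S_{J,f}$, the identification of $\chi_{S_{J,f}}=0$ with holomorphy of $\mathrm{grad}'_JS_{J,f}$) matches the paper's conventions and is correct.
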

We define the fourth order elliptic differential operator $L : C_\bfC^\infty(M) \to C_\bfC^\infty(M)$ by
 \begin{eqnarray}\label{L}
 (w,Lu)_{L^2(f^{-2m-1})} = (\nabla^{\prime\prime}\nabla^{\prime\prime}w, \nabla^{\prime\prime}\nabla^{\prime\prime}u)_{L^2(f^{-2m+1})}.
\end{eqnarray}
We further define the fourth order elliptic differential operator $\barL : C_\bfC^\infty(M) \to C_\bfC^\infty(M)$ by
 \begin{eqnarray}\label{barL}
 \barL u = \overline{L\baru}.
\end{eqnarray}
\begin{lem}\label{derivative5}
If $J^{-1}\dot{J} = 2\Re \nabla^i\nabla_\barj\, u\,\frac{\partial}{\partial z^i} \otimes d\overline{z^j}$ for a real valued smooth
function $u \in C^\infty(M)$, we have
 \begin{eqnarray*}
\frac{d}{dt}|_{t=0}\, S_{J(t),f} = (Lu + \barL u).
\end{eqnarray*}
\end{lem}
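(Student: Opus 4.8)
The plan is to compute the first variation of the conformal scalar curvature $S_{J,f}$ directly, reusing the machinery already established in Theorem \ref{AM}. The key observation is that Lemma \ref{derivative5} is the ``pointwise'' refinement of the integrated formula in Theorem \ref{AM}: there we computed $\frac{d}{dt}\int_M S_{J,f}\,h\,f^{-2m-1}\omega^m$ against an arbitrary test function $h$, whereas here we want to identify the integrand $\dot{S}_{J,f}$ itself. The natural strategy is therefore to pair $\dot{S}_{J,f}$ against an arbitrary $h \in C_\bfC^\infty(M)$ in the $(\cdot,\cdot)_{L^2(f^{-2m-1})}$-inner product and match the result with the definition \eqref{L} of $L$ (and \eqref{barL} of $\barL$). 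Concretely, I would start from the specialization $J^{-1}\dot{J} = 2\Re\,\nabla^i\nabla_\barj u\,\frac{\partial}{\partial z^i}\otimes d\barz^j$, so that in the notation of Theorem \ref{AM} we have $v^i{}_\barj = \nabla^i\nabla_\barj u$ and hence $v^{ij} = \nabla^i\nabla^j u = \nabla^j\nabla^i u$ (symmetric because $\nabla^i\nabla^j = \nabla^j\nabla^i$ on functions in the K\"ahler setting).

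First I would substitute this $v^{ij}$ into the right-hand side of Theorem \ref{AM}, giving
\begin{equation*}
\frac{d}{dt}\Big|_{t=0}\int_M S_{J(t),f}\,h\,f^{-2m-1}\omega^m = 2\Re\int_M (\nabla_i\nabla_j h)\,(\nabla^i\nabla^j u)\,f^{-2m+1}\omega^m.
\end{equation*}
Next I would recognize the right-hand side as an $L^2(f^{-2m+1})$-pairing of second covariant derivatives. Writing $2\Re(\,\cdot\,) = (\,\cdot\,) + \overline{(\,\cdot\,)}$ and matching against the definitions \eqref{L} and \eqref{barL}, the term $\int_M (\nabla_i\nabla_j h)(\nabla^i\nabla^j u)f^{-2m+1}\omega^m$ should reproduce $(h, Lu)_{L^2(f^{-2m-1})}$ after one identifies $\nabla''\nabla''$ with the relevant $(0,2)$-type Hessian and uses the conjugate-linearity conventions, while the complex-conjugate term yields $(h,\barL u)_{L^2(f^{-2m-1})}$. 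Since $u$ is real, the two contributions are complex conjugates of the ``$L$'' and ``$\barL$'' pieces, and combine to give $(h, Lu + \barL u)_{L^2(f^{-2m-1})}$. Because $h$ is arbitrary and the pairing is nondegenerate, I conclude $\dot{S}_{J,f} = Lu + \barL u$ pointwise.

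The main obstacle I anticipate is purely bookkeeping: getting the conjugation conventions and index placements in \eqref{L}--\eqref{barL} to line up exactly with the $2\Re\int (\nabla_i\nabla_j h)(\nabla^i\nabla^j u)$ expression coming out of Theorem \ref{AM}. In particular one must check that $\nabla''\nabla''$ applied to a real function $u$ produces precisely the tensor $\nabla_\bari\nabla_\barj u$ whose pairing against $\nabla^i\nabla^j h$ (via the metric) reproduces the definition of $L$, and that the weight $f^{-2m+1}$ on the right of \eqref{L} versus $f^{-2m-1}$ on the left is consistent with the two weights appearing in Theorem \ref{AM}. Verifying that the reality of $u$ makes the $\barL$ term the genuine complex conjugate of the $L$ term—rather than an independent operator—is the one place where a sign or a conjugation could silently go wrong, so I would track that step carefully. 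Once the conventions are pinned down, the proof is a one-line substitution followed by integration-by-parts-free matching of bilinear forms.
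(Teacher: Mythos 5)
Your proposal is correct and follows essentially the same route as the paper: the paper likewise pairs $\dot S_{J(t),f}$ against an arbitrary (real) test function, applies Theorem \ref{AM} with $v^i{}_\barj = \nabla^i\nabla_\barj u$, rewrites $2\Re(\cdot)$ as the sum of the term and its conjugate, identifies these with $(w,Lu)$ and $(w,\barL u)$ via the definitions \eqref{L}--\eqref{barL} and the reality of $u$, and concludes by nondegeneracy of the pairing. The bookkeeping points you flag (the $f^{-2m+1}$ versus $f^{-2m-1}$ weights and the conjugation making $\barL u = \overline{Lu}$ for real $u$) are exactly the steps the paper's computation carries out.
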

\begin{proof} For any real smooth function $w$ we see from Theorem \ref{AM}
\begin{eqnarray*}
&&\frac{d}{dt}|_{t=0}\, \int_M w\,S_{J(t),f}\,f^{-2m-1}\omega^m \\
&& =
\int_M ((\nabla^i\nabla_\barj\, w,\nabla^i\nabla_\barj\, u) + \overline{\nabla^i\nabla_\barj\, w,\nabla^i\nabla_\barj\, u)}
f^{-2m+1}\omega^m\\
&& = (w,Lu) + \overline{(w,Lu)}\\
&& = (w,Lu) + (w,\overline{Lu}) = (w, Lu + \barL u).
\end{eqnarray*}
This completes the proof.
\end{proof}
\begin{lem}\label{equivariance}
For real valued smooth functions $u$ and $w$ in $C^\infty(M)^K$ we have
\begin{eqnarray*}
\Omega(\chi_u,\chi_w) = - \int_M \{w,u\}\,S_{J,f}\,f^{-2m-1}\omega^m.
\end{eqnarray*}
\end{lem}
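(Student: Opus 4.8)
The plan is to read Lemma \ref{equivariance} as the standard equivariance identity for the moment map furnished by Proposition \ref{AM2}, and to prove it by differentiating that moment map along a Hamiltonian flow. The starting point is that equation (\ref{equimoment}) says precisely that the fundamental vector field $\chi_u$ is the Hamiltonian vector field on $(Z^K,\Omega)$ of the function
$$\mu_u(J) := \int_M S(J,f)\,u\,f^{-2m-1}\,\omega^m,$$
i.e.\ $\iota_{\chi_u}\Omega = d\mu_u$. Granting this, and writing $\chi_w = L_{X_w}J$ for the fundamental vector field generated by $w$, the left-hand side unwinds to
$$\Omega(\chi_u,\chi_w) = d\mu_u(\chi_w) = \frac{d}{dt}\Big|_{t=0}\int_M S(\phi_t^\ast J,f)\,u\,f^{-2m-1}\,\omega^m,$$
where $\phi_t$ denotes the flow of the Hamiltonian vector field $X_w$ and $d\mu_u(\chi_w)$ is simply the directional derivative of $\mu_u$ along the curve $t\mapsto\phi_t^\ast J$.

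The key input is the equivariance of the scalar curvature already recorded at the end of Proposition \ref{AM2}. Concretely, since $w\in C^\infty(M)^K$ means $Kw=0$ and $K=X_f$, one gets $\{f,w\}=0$, so the flow $\phi_t$ preserves both $\omega$ (being Hamiltonian) and the potential $f$. Hence the conformal metric transforms naturally, $f^{-2}g_{\phi_t^\ast J} = \phi_t^\ast(f^{-2}g_J)$, and therefore $S(\phi_t^\ast J,f) = \phi_t^\ast S(J,f)$. I would then change variables under $\phi_t$: using $\phi_t^\ast f = f$ and $\phi_t^\ast\omega = \omega$, the integral becomes $\int_M S(J,f)\,(u\circ\phi_t^{-1})\,f^{-2m-1}\,\omega^m$, and differentiating at $t=0$ with $\frac{d}{dt}|_{t=0}(u\circ\phi_t^{-1}) = -X_w u = -\{w,u\}$ yields the asserted formula.

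The main obstacle is the bookkeeping of signs and conventions rather than any analytic difficulty. One must confirm that (\ref{equimoment}) gives $\iota_{\chi_u}\Omega = +d\mu_u$ with the Hermitian and symplectic conventions of (\ref{symplectic}), that the curve representing $\chi_w$ is $t\mapsto\phi_t^\ast J$ (equivalently $\chi_w = +L_{X_w}J$ rather than its negative, and correctly passing between the $\dot J$ and $J^{-1}\dot J$ descriptions of a tangent vector), and that $\frac{d}{dt}|_{t=0}(u\circ\phi_t^{-1})$ equals $-\{w,u\}$ with the Poisson bracket as defined just after (\ref{grad}). A convenient consistency check is the general moment-map identity $\Omega(\chi_u,\chi_w) = \langle\mu,\{u,w\}\rangle$, which reproduces the claim because $\{u,w\} = -\{w,u\}$ and $[X_u,X_w] = X_{\{u,w\}}$; lining up all the signs to land exactly on the minus sign in front of $\int_M\{w,u\}\,S_{J,f}\,f^{-2m-1}\,\omega^m$ is the only delicate point.
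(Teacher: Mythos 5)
Your proposal is correct and takes essentially the same route as the paper: the paper's proof also invokes the $\mathrm{Ham}(M)^K$-equivariance of the moment map from Proposition \ref{AM2}, writes $\int_M u\,S(\sigma J,f)\,f^{-2m-1}\omega^m = \int_M u\circ\sigma^{-1}\,S_{J,f}\,f^{-2m-1}\omega^m$ for $\sigma$ in the flow generated by $X_w$, and differentiates in $t$, identifying one side via (\ref{equimoment}) as $\Omega(\chi_u,\chi_w)$ and the other as $-\int_M\{w,u\}\,S_{J,f}\,f^{-2m-1}\omega^m$. The only difference is that you spell out why the equivariance holds (the flow of $X_w$ preserves $\omega$ and $f$, hence $S(\phi_t^\ast J,f)=\phi_t^\ast S(J,f)$), a point the paper asserts without proof.
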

\begin{proof}
This lemma is simply a restatement of Proposition \ref{AM2}. Let $\sigma$ be in the Hamiltonian diffeomorphisms
generated by the Hamiltonian vector field of $w \in C^\infty(M)^K$. Since $S_{J,f}$ gives a 
$\mathrm{Ham}(M)^K$-equivariant moment map we have
\begin{equation}\label{equivariant2}
\int_M u\,S(\sigma J, f)\,f^{-2m-1}\omega^m = \int_M u\circ\sigma^{-1}\,S_{J,f}\,f^{-2m-1}\omega^m.
\end{equation}
Taking the time differential of $\sigma$ we obtain the lemma by (\ref{equimoment}).
\end{proof}
\begin{lem}\label{Poisson}
For any smooth complex valued smooth function $u \in C^\infty(M)^K$ we have
\begin{equation*}
(\barL - L)u = \frac{i}2\{u,S(J,f)\} = \frac12  (u^\alpha S(J, f)_\alpha - S(J, f)^\alpha u_\alpha)
\end{equation*}
where $u^\alpha = g^{\alpha\barbet}\partial u/\partial \overline{z^\beta}$ for local holomorphic coordinates 
$z^1, \cdots, z^m$.
\end{lem}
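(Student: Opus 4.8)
The plan is to compute the first variation of the scalar curvature $S(J,f)$ along the distinguished direction $\chi_u$ in two different ways and to match the results. Before doing so, I would reduce to the real case: both sides of the asserted identity are complex-linear in $u$ (the operators $L$ and $\barL$ are complex-linear by their definitions (\ref{L}), (\ref{barL}), and each term of $u^\alpha S_\alpha - S^\alpha u_\alpha$ is linear in $u$ and its first derivatives), so it suffices to prove the formula for real-valued $u \in C^\infty(M)^K$ and then extend by linearity.

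First I would put $\chi_u$ into the normal form $J^{-1}\dot J = 2\Re(v^i{}_\bark\,\partial/\partial z^i\otimes d\barz^k)$ of Theorem \ref{AM}. Comparing with the expression for $\chi_u$ in Proposition \ref{AM2} and using $\nabla_\bark u^i = \nabla^i\nabla_\bark u$ gives $v^i{}_\bark = 2\sqrt{-1}\,\nabla^i\nabla_\bark u$, hence $v^{ij} = 2\sqrt{-1}\,\nabla^i\nabla^j u$. Feeding this into Theorem \ref{AM} with an arbitrary real test function $h = w$ and recognizing the resulting integral as the operator $L$ through (\ref{L}), I expect to obtain
\[
\frac{d}{dt}\Big|_{t=0}\int_M w\,S(J(t),f)\,f^{-2m-1}\omega^m
= 4\,\Re\big[\sqrt{-1}\,(w,Lu)_{L^2(f^{-2m-1})}\big]
= -2\sqrt{-1}\,(w,(\barL-L)u)_{L^2(f^{-2m-1})},
\]
where I use $\overline{(w,Lu)} = (w,\barL u)$, valid for real $w$ and real $u$.

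Second, I would read off the same variation from the moment-map picture. By (\ref{equimoment}) the left-hand side equals $\Omega(\chi_w,\chi_u)$, and Lemma \ref{equivariance} identifies this with $\int_M \{w,u\}\,S(J,f)\,f^{-2m-1}\omega^m$, the bracket here being the (real) symplectic Poisson bracket. The crucial step is then an integration by parts: since $u\in C^\infty(M)^K$ means $Ku=0$, i.e. $\{f,u\}=0$, the function $f$ — and therefore the weighted volume form $f^{-2m-1}\omega^m$ — is invariant under the Hamiltonian flow of $u$, so the Hamiltonian vector field of $u$ may be moved off $w$ without boundary contributions, yielding
\[
\int_M \{w,u\}\,S\,f^{-2m-1}\omega^m
= \int_M w\,\{u,S\}\,f^{-2m-1}\omega^m
= (w,\{u,S\})_{L^2(f^{-2m-1})}.
\]
Comparing the two expressions for every real $w$ and stripping the test function gives $-2\sqrt{-1}\,(\barL-L)u = \{u,S\}$, that is $(\barL-L)u = \tfrac{\sqrt{-1}}{2}\{u,S\}$; converting the symplectic bracket into the holomorphic one via $\{u,S\} = -\sqrt{-1}\,(u^\alpha S_\alpha - S^\alpha u_\alpha)$ reproduces the stated $\tfrac12(u^\alpha S_\alpha - S^\alpha u_\alpha)$.

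The main obstacle I anticipate is bookkeeping rather than conceptual. One must be scrupulous about the two competing bracket normalizations — the holomorphic bracket $\nabla^i u\nabla_i w - \nabla^i w\nabla_i u$ of Section 2 versus the real symplectic bracket that appears naturally in the equivariance relation, the two differing by a factor $\sqrt{-1}$ — and about the factors of $2$ and $\sqrt{-1}$ entering through the normalization of $\chi_u$ and through $v^{ij} = 2\sqrt{-1}\,\nabla^i\nabla^j u$. A reliable consistency check is reality: for real $u$ the quantity $(\barL-L)u$ is purely imaginary while the symplectic $\{u,S\}$ is real, so both sides of the final identity are genuinely real, which pins down every sign. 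As an alternative, entirely self-contained route, one could instead expand $L$ and $\barL$ in local holomorphic coordinates, integrate by parts twice in the defining relation (\ref{L}), and commute the repeated $\nabla''$-derivatives by the Ricci identity; the antisymmetric combination $\barL - L$ then isolates exactly the curvature commutator, whose relevant trace is the scalar curvature, again producing $\{u,S\}$. This variant avoids the moment-map input but transfers the difficulty onto tracking the $f$-weights through the two integrations by parts.
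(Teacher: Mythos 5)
Your proposal is correct and takes essentially the same route as the paper: your ``two computations of the variation'' combine into the identity $-2\sqrt{-1}\,(w,(\barL-L)u)=\Omega(\chi_w,\chi_u)$, which is exactly the paper's starting equality $(w,(\barL-L)u)=\frac{i}{2}\Omega(\chi_w,\chi_u)$, after which both arguments invoke Lemma \ref{equivariance}, integrate by parts against the $X_u$-invariant measure $f^{-2m-1}\omega^m$ (using $Ku=0$), and convert the real Poisson bracket into the holomorphic expression. The paper likewise reduces to real-valued $u$ at the outset, so the two proofs coincide in all essential steps.
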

\begin{proof}
It is sufficient to prove when $u$ is real valued. For any real valued smooth function $w \in C^\infty(M)^K$ it
follows from Lemma \ref{equivariance} that
\begin{eqnarray*}
&&(w, \barL u - L u)_{L^2(f^{-2m-1}}) \\&&= \overline{(\nabla^{\prime\prime}\nabla^{\prime\prime}w, \nabla^{\prime\prime}\nabla^{\prime\prime}u)_{L^2(f^{-2m+1})} }- (\nabla^{\prime\prime}\nabla^{\prime\prime}w, \nabla^{\prime\prime}\nabla^{\prime\prime}u)_{L^2(f^{-2m+1})}\\
 && = \frac{i}2 \Omega(\chi_v,\chi_u)\\
 && = -\frac{i}2 (\{u,w\}, S_{J,f})_{L^2(f^{-2m-1})}\\
 && = \frac{i}2 (w, \{u,S_{J,f}\})_{L^2(f^{-2m-1})}\\
 && = \frac{i}2 (w, X_u S_{J,f})_{L^2(f^{-2m-1})}\\
 && = \frac{i}2 (w, \omega(X_u, J\grad S_{J,f}))_{L^2(f^{-2m-1})}\\
 && = \frac{i}2 (w, du(J\grad S_{J,f}))_{L^2(f^{-2m-1})}\\
 && = (w, S(J,f)_\alpha u^\alpha  - S(J,f)^\alpha u_\alpha )_{L^2(f^{-2m-1})}.
\end{eqnarray*}
This completes the proof of Lemma \ref{Poisson}.
\end{proof}
\begin{lem}\label{first der}
If $u \in C^\infty(M)^K$ and $J^{-1}\dot{J} = 2\Re \nabla^i\nabla_\barj\, u\,\frac{\partial}{\partial z^i} \otimes d\overline{z^j} 
 \in T_J Z^K$,
then
\begin{eqnarray}
\frac{d}{dt}|_{t=0} \int_M S(J(t),f)^2 f^{-2m-1} \omega^m &=& 
4(u, LS(J,f))_{L^2(f^-{2m-1})}\label{first der2}\\
&=& 4(u, \barL S(J,f))_{L^2(f^-{2m-1})}.\nonumber
\end{eqnarray}
\end{lem}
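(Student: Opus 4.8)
The plan is to differentiate under the integral sign, rewrite $\dot S$ by Lemma \ref{derivative5}, transfer the operator $L$ onto $S(J,f)$ by self-adjointness, and finally invoke Lemma \ref{Poisson} applied to $S(J,f)$ itself to remove the real part.

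First I would differentiate. Since $f$ and $\omega$ are fixed, the product rule gives
$$
\frac{d}{dt}\Big|_{t=0}\int_M S(J(t),f)^2 f^{-2m-1}\omega^m = 2\int_M S(J,f)\,\dot S\, f^{-2m-1}\omega^m,
$$
where $\dot S = \frac{d}{dt}|_{t=0}S(J(t),f)$. Because $u$ is real and $J^{-1}\dot J = 2\Re\,\nabla^i\nabla_\barj u\,\frac{\partial}{\partial z^i}\otimes d\overline{z^j}$, Lemma \ref{derivative5} identifies $\dot S = Lu + \barL u$. Hence the derivative equals $2\bigl(S(J,f),Lu\bigr)_{L^2(f^{-2m-1})} + 2\bigl(S(J,f),\barL u\bigr)_{L^2(f^{-2m-1})}$.

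Next I would transfer $L$ onto $S(J,f)$. The defining identity (\ref{L}) is symmetric in its two slots up to conjugation, so $L$ is self-adjoint for the Hermitian $L^2(f^{-2m-1})$ product. Tracking conjugations and using that $S(J,f)$ and $u$ are real together with $\barL u = \overline{Lu}$, this yields $\bigl(S(J,f),Lu\bigr) = \overline{\bigl(u,LS(J,f)\bigr)}$ and $\bigl(S(J,f),\barL u\bigr) = \bigl(u,LS(J,f)\bigr)$. Adding the two contributions, the derivative becomes $4\,\Re\bigl(u,LS(J,f)\bigr)_{L^2(f^{-2m-1})}$.

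The crucial final step is to show that $\bigl(u,LS(J,f)\bigr)$ is already real, so that the real part may be dropped. For this I apply Lemma \ref{Poisson} with $u$ replaced by $S(J,f)$: since $\{S(J,f),S(J,f)\}=0$, we get $(\barL - L)S(J,f)=0$, i.e. $\barL S(J,f) = LS(J,f)$. As $\barL S(J,f) = \overline{L\,S(J,f)}$ for real $S(J,f)$, this forces $LS(J,f)$ to be real-valued; hence $\bigl(u,LS(J,f)\bigr)$ is real and $4\,\Re\bigl(u,LS(J,f)\bigr) = 4\bigl(u,LS(J,f)\bigr)$, giving (\ref{first der2}). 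The same identity $\barL S(J,f) = LS(J,f)$ yields the second equality $4\bigl(u,LS(J,f)\bigr) = 4\bigl(u,\barL S(J,f)\bigr)$. I expect the main obstacle to be bookkeeping---keeping straight the Hermitian versus bilinear pairings, the two weights $f^{-2m\pm 1}$, and the various conjugations---rather than anything deep; the one genuinely substantive observation is that Lemma \ref{Poisson} applied to $S(J,f)$ itself makes $LS(J,f)$ real and thereby collapses the real part.
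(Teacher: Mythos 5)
Your proof is correct and takes essentially the same route as the paper's: both rest on the first-variation formula of Theorem \ref{AM} (which you invoke through its immediate corollary, Lemma \ref{derivative5}), the defining identity (\ref{L}) for $L$ used as self-adjointness, and, crucially, Lemma \ref{Poisson} applied to $S(J,f)$ itself to obtain $\barL S(J,f)=LS(J,f)$ (hence the reality of $LS(J,f)$), which collapses the real part. The only cosmetic difference is that the paper pairs Theorem \ref{AM} directly against $h=S(J,f)$ and then identifies the result via (\ref{L}), whereas you differentiate first and transfer $L$ across the inner product; the two computations are the same steps regrouped.
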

\begin{proof}
Since $S(J,f) \in C^\infty(M)^K$ we can apply Theorem \ref{AM} to show
that the left hand side of (\ref{first der2}) is equal to 
\begin{eqnarray*}
&&4\Re (\nabla^{\prime\prime}\nabla^{\prime\prime}S(J,f), \nabla^{\prime\prime}\nabla^{\prime\prime}u)_{L^2(f^{-2m+1})}\\
&& = 2((\nabla^{\prime\prime}\nabla^{\prime\prime}S(J,f), \nabla^{\prime\prime}\nabla^{\prime\prime}u)_{L^2(f^{-2m+1})} + 
(\nabla^{\prime\prime}\nabla^{\prime\prime}u, \nabla^{\prime\prime}\nabla^{\prime\prime}S(J,f))_{L^2(f^{-2m+1})})\\
&& = 2(u,LS(J,f))_{L^2(f^{-2m-1})} + 2(u,\barL S(J,f))_{L^2(f^{-2m-1})}.
\end{eqnarray*}
But Lemma \ref{Poisson} implies 
$$ \barL S(J,f) = LS(J,f).$$
Hence the left hand side of (\ref{first der2}) is equal to
$$ 4(u,LS(J,f))_{L^2(f^{-2m-1})} = 4(u,\barL S(J,f))_{L^2(f^{-2m-1})}.$$
\end{proof}
\begin{lem}\label{derL}
Suppose that $(\omega,J,f)$ is an $f$-extremal K\"ahler metric so that $J\grad S(J,f)$ is a holomorphic vector field.
If $J^{-1}\dot{J} = 2\Re \nabla^i\nabla_\barj\, u\,\frac{\partial}{\partial z^i} \otimes d\overline{z^j}$ for some real smooth function $u \in C^\infty(M)^K$ then we have
\begin{eqnarray*}
(\frac{d}{dt}|_{t=0} L)S(J,f) &=& - \frac12 L(S(J,f)^\alpha u_{\alpha} - u^\alpha S(J,f)_\alpha) \\
&=& L(\barL - L)u
\end{eqnarray*}
\end{lem}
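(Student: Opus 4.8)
The second displayed equality is immediate: by Lemma \ref{Poisson} we have $(\barL-L)u=\tfrac12\bigl(u^\alpha S(J,f)_\alpha-S(J,f)^\alpha u_\alpha\bigr)$, so that $L(\barL-L)u=-\tfrac12 L\bigl(S(J,f)^\alpha u_\alpha-u^\alpha S(J,f)_\alpha\bigr)$. The content is therefore the first equality, which I shall write as $(\tfrac{d}{dt}\big|_{t=0}L)\,S=L(\barL-L)u$ after abbreviating $S:=S(J,f)$ and $\mathcal{D}:=\barpartial\,\mathrm{grad}'$, so that $L$ is characterized by $(w,Lu)_{L^2(f^{-2m-1})}=(\mathcal{D}w,\mathcal{D}u)_{L^2(f^{-2m+1})}$ and, by Lemma \ref{extremal}, $f$-extremality is exactly the condition $\mathcal{D}S=0$. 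The plan is to test the claimed identity against an arbitrary real $w\in C^\infty(M)^K$ and to exploit that the measures $f^{-2m\pm1}\omega^m$ are independent of $t$ (only $J$ varies), so that all $t$-dependence on the right of the defining relation for $L$ sits in the operator $\mathcal{D}_{J(t)}$ and in $S(J(t),f)$.

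Concretely, I would differentiate at $t=0$ the identity
\[
(w,L_{J(t)}S(J(t),f))_{L^2(f^{-2m-1})}=(\mathcal{D}_{J(t)}w,\mathcal{D}_{J(t)}S(J(t),f))_{L^2(f^{-2m+1})}.
\]
On the right there are three sources of variation: the operator acting on the fixed $w$, the operator-and-function $\mathcal{D}_{J(t)}S(J(t),f)$, and the metric used to contract indices in the tensorial $L^2(f^{-2m+1})$ pairing. Because $\mathcal{D}S=0$ at the extremal point, the first and third contributions — each retaining an undifferentiated factor $\mathcal{D}S$ — vanish, leaving
\[
\frac{d}{dt}\Big|_{0}(w,L_{J(t)}S(J(t),f))_{L^2(f^{-2m-1})}=\Bigl(\mathcal{D}w,\tfrac{d}{dt}\big|_{0}\mathcal{D}_{J(t)}S(J(t),f)\Bigr)_{L^2(f^{-2m+1})}.
\]
Writing $\tfrac{d}{dt}|_0\mathcal{D}_{J(t)}S(J(t),f)=(\dot{\mathcal D})S+\mathcal{D}\dot S$ with $\dot S=Lu+\barL u$ by Lemma \ref{derivative5}, the term $(\mathcal{D}w,\mathcal{D}\dot S)=(w,L\dot S)$ cancels the $L\dot S$ produced by the chain rule $\tfrac{d}{dt}|_0 L_{J(t)}S(J(t),f)=(\dot L)S+L\dot S$ on the left, and I am left with the clean reduction
\[
(w,(\dot L)S)_{L^2(f^{-2m-1})}=(\mathcal{D}w,(\dot{\mathcal D})S)_{L^2(f^{-2m+1})},
\]
where $(\dot{\mathcal D})S$ is the first variation of the operator $\mathcal{D}=\barpartial\,\mathrm{grad}'$ applied to the fixed function $S$, in the direction $J^{-1}\dot J=2\Re\,\nabla^i\nabla_\barj u\,\tfrac{\partial}{\partial z^i}\otimes d\overline{z^j}$.

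It now suffices to show $(\dot{\mathcal D})S\equiv\mathcal{D}\bigl((\barL-L)u\bigr)$ modulo the orthogonal complement of $\mathrm{Im}\,\mathcal{D}$, for then the right-hand side above equals $(\mathcal{D}w,\mathcal{D}((\barL-L)u))=(w,L(\barL-L)u)$ and the lemma follows by non-degeneracy of the pairing. As a guide for the sign and for the appearance of the Poisson bracket, I note that in the \textit{conjugate} direction $\chi_u$ — which, unlike the present one, is generated by the genuine Hamiltonian flow $\sigma_t$ of $u$ (under which $\omega$ and $f$ are preserved because $Ku=0$, and $S$ transforms by pull-back, so that $\mathcal{D}_{\sigma_t^\ast J}S(\sigma_t^\ast J,f)=\sigma_t^\ast(\mathcal{D}S)\equiv0$) — differentiation gives at once $(\dot{\mathcal D})_{\chi_u}S=-\mathcal{D}\{u,S\}$. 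The direction in the statement is the $\mathbb{J}$-rotate $-\tfrac12\mathbb{J}\chi_u$ of this (it produces the symmetric variation $Lu+\barL u$ of $S$ rather than the skew one $\{u,S\}$), so equivariance alone does not deliver $(\dot{\mathcal D})S$; the main obstacle is to compute the first variation of $\mathcal{D}=\barpartial\,\mathrm{grad}'$ honestly in this deformation direction. I would do this by varying the two constituents separately — the index raising in $\mathrm{grad}'$ through $(g^{-1})^\cdot=-J^{-1}\dot J\,g^{-1}$ from (\ref{derivative2}), and the change of the $(0,1)$-projection and of the Chern connection under the deformation of $J$ — and then invoke the two available simplifications, $\mathcal{D}S=\nabla_\barj\nabla^i S=0$ (holomorphicity of $\mathrm{grad}'S$) and $\nabla_i\nabla_j f=0$ (holomorphicity of $K$, as already used in the proof of Theorem \ref{AM}). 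I expect these to annihilate all but the terms that reassemble into $\mathcal{D}$ of $\tfrac12(u^\alpha S_\alpha-S^\alpha u_\alpha)$; identifying this quantity with $(\barL-L)u$ via Lemma \ref{Poisson} then closes both equalities of the lemma.
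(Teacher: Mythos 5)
Your reduction is sound as far as it goes: differentiating the defining identity $(w,L_{J(t)}S(J(t),f))_{L^2(f^{-2m-1})}=(\mathcal{D}_{J(t)}w,\mathcal{D}_{J(t)}S(J(t),f))_{L^2(f^{-2m+1})}$ (with your shorthand $\mathcal{D}=\nabla''\nabla''$), discarding every term that retains an undifferentiated factor $\mathcal{D}S=0$, and cancelling $L\dot S$ on the two sides via Lemma \ref{derivative5}, does correctly reduce the lemma to the statement $(\mathcal{D}w,(\dot{\mathcal{D}})S)_{L^2(f^{-2m+1})}=(w,L(\barL-L)u)_{L^2(f^{-2m-1})}$ for all real $w\in C^\infty(M)^K$. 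But this is where your proof stops being a proof: the assertion $(\dot{\mathcal{D}})S\equiv\mathcal{D}\bigl((\barL-L)u\bigr)$ modulo $(\mathrm{Im}\,\mathcal{D})^{\perp}$ is precisely the nontrivial content of the lemma, and you offer only the expectation that a term-by-term variation of $\nabla''\nabla''$ (the $(0,1)$-projection, the connection, the index-raising), simplified by $\mathcal{D}S=0$ and $\nabla_i\nabla_j f=0$, will ``reassemble'' into $\mathcal{D}(\tfrac12(u^\alpha S_\alpha-S^\alpha u_\alpha))$. As you yourself note, the equivariance argument that settles the direction $\chi_u$ does not transfer to the rotated direction $2\Re\,\nabla^i\nabla_\barj u\,\tfrac{\partial}{\partial z^i}\otimes d\barz^j$, so nothing in your text actually establishes this identity; what you have is a correct reduction plus an unproven claim, and the unproven claim is the heart of the matter.

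The paper fills exactly this hole without ever computing $(\dot{\mathcal{D}})S$, by observing that the rotated direction is also generated by a flow: the flow $\{f_s\}$ of $-\tfrac12 JX_u$, under which $J$ moves in the given direction while $\omega$ moves inside its K\"ahler class, $L_{\frac12 JX_u}\omega=i\partial\barpartial u$. Diffeomorphism covariance gives $L(f_sJ,\omega)\circ f_s^{\ast}=f_s^{\ast}\circ L(J,f_{-s}^{\ast}\omega)$, and one chooses a family $S_s$ with $\grad_s'S_s=\grad'S$ fixed and holomorphic, hence $L(J,f_{-s}^{\ast}\omega)S_s=0$ identically, and with expansion $S_s=S+sS^\alpha u_\alpha+O(s^2)$. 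Differentiating the resulting identity $L(f_sJ,\omega)f_s^{\ast}S_s=0$ yields $(\dot L)S=-L(-\tfrac12(JX_u)S+S^\alpha u_\alpha)$, and the only computation required is the elementary $(JX_u)S=u_\alpha S^\alpha+u^\alpha S_\alpha$, after which Lemma \ref{Poisson} gives $(\dot L)S=L(\barL-L)u$. To complete your argument you must either carry out the honest first variation of $\nabla''\nabla''$ applied to $S$ (which your outline does not guarantee will close), or replace the missing step by this flow argument; in the latter case your reduction becomes superfluous, since the differentiated identity produces $(\dot L)S$ directly.
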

\begin{proof}
First note that if 
$$ i(X_u) \omega = du$$
then 
\begin{equation}\label{JX_u}
L_{\frac12 JX_u} \omega = i\partial\barpartial u.
\end{equation}
Let $\{f_s\}$ be the flow generated by $-\frac12 JX_u$. Let $S$ be a smooth function on $M$ such that $\grad\, S$ 
is a holomorphic vector field. We shall compute 
$ \frac{d}{ds}|_{s=0} L(f_sJ,\omega) S$, and apply to $S = S(J,f)$, and obtain the conclusion of
Lemma \ref{derL}. Let $\{S_s\}$ be a family of smooth functions such that
$S_0 = S$, that
$$ \grad_s^\prime\,S_s = \grad^\prime\, S,$$
where $\grad_s$ denotes the gradient with respect to $f_{-s}^\ast \omega$, and that
$$ \int_M S_s (f_{-s}^\ast \omega)^m = \int_M S \omega^m. $$
This implies
\begin{equation}\label{S_s1}
L(f_sJ,\omega)f_s^\ast S_s = f_s^\ast(L(J,f_{-s}^\ast \omega) S_s) = 0.
\end{equation}
On the other hand, 
in general, if $f_{-s}^\ast \omega = \omega + i\partial\barpartial \varphi$ then $S_s = S + S^\alpha\varphi_\alpha$.
Therefore, since $L_{\frac12 JX_u} \omega = i\partial\barpartial u$ by (\ref{JX_u}) we have
\begin{equation}\label{S_s1}
S_s = S + s S^\alpha\,u_\alpha + O(s^2).
\end{equation}
Thus taking the derivative of (\ref{S_s1}), we obtain
\begin{equation}\label{derL2}
(\frac{d}{ds}|_{s=0}L)S + L(-\frac12 (JX_u)S + S^\alpha u_\alpha) = 0.
\end{equation}
By an elementary computation we see
\begin{eqnarray*}
(JX_u)S &=& g(JX_u, \grad\,S) = \omega(X_u, \grad\, S) = du(\grad\, S)\\
&=& (\partial u + \barpartial u)(\nabla^\prime S + \nabla^{\prime\prime} S) = u_\alpha S^\alpha + u^\alpha S_\alpha.
\end{eqnarray*}
Thus, from (\ref{derL2}) and the above computation, we obtain
\begin{eqnarray*}
(\frac{d}{ds}|_{s=0}L)S &=& L(\frac12 (u_\alpha S^\alpha + u^\alpha S_\alpha) - S^\alpha u_\alpha) = 0\\
&=& \frac12 L (u^\alpha S_{\alpha} - u_\alpha S^\alpha)\\
&=& L(\barL - L) u.
\end{eqnarray*}
This completes the proof of Lemma \ref{derL}.
\end{proof}
To express the Hessian formula, for a real smooth function $u \in C^\infty(M)^K$, we identify
$J^{-1}\dot{J} = 2\Re \nabla^i\nabla_\barj\, u\,\frac{\partial}{\partial z^i} \otimes d\overline{z^j}$  with 
$\nabla^{\prime\prime}\nabla^{\prime\prime} u$.
\begin{thm}\label{Hessian}
Let $J$ be a critical point of $\Phi$ so that $(\omega,J,f)$ is an $f$-extremal K\"ahler metric.
Let $u$ and $w$ be real smooth functions in $C^\infty(M)^K$. Then the Hessian $\mathrm{Hess}(\Phi)_J$ at
$J$ is given by
$$\mathrm{Hess}(\Phi)_J(\nabla^{\prime\prime}\nabla^{\prime\prime} u, \nabla^{\prime\prime}\nabla^{\prime\prime} w)
= 8(u, L\barL w) = 8(u,\barL L w). $$
In particular, $L\barL = \barL L$ on $C_\bfC^\infty(M)^K$ at any critical point of $\Phi$.
\end{thm}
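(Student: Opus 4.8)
The plan is to read the Hessian off as the directional derivative of the gradient one-form provided by the first variation formula, and then to assemble the two lemmas that record how $S(J,f)$ and the operator $L$ themselves vary. By Lemma \ref{first der}, the differential of $\Phi$ at any $J$ is the one-form
$$ d\Phi_J(\nabla''\nabla'' u) = 4\,(u, L\,S(J,f))_{L^2(f^{-2m-1})}, $$
valid for every fixed real $u \in C^\infty(M)^K$ under the identification of $\nabla''\nabla'' u$ with $2\Re\nabla^i\nabla_{\bar j}u\,\frac{\partial}{\partial z^i}\otimes d\bar z^j$. Since $J$ is a critical point, $d\Phi_J = 0$; equivalently $L\,S(J,f)=0$, and $J$ is $f$-extremal by Lemma \ref{extremal}, so $\grad'_J S(J,f)$ is holomorphic. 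Because $d\Phi$ vanishes at $J$, the Hessian there is the genuine directional derivative of $d\Phi(Y)$ along $X$, independent of any extension of the vector field $Y$. I would therefore hold the function $u$ fixed, choose a curve $J(s)$ with $J^{-1}\dot J|_{s=0}=2\Re\nabla^i\nabla_{\bar j} w\,\frac{\partial}{\partial z^i}\otimes d\bar z^j$, and compute
$$ \mathrm{Hess}(\Phi)_J(\nabla''\nabla'' w, \nabla''\nabla'' u) = \frac{d}{ds}\Big|_{s=0} 4\,\big(u, L(J(s),f)\,S(J(s),f)\big)_{L^2(f^{-2m-1})}. $$

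The measure $f^{-2m-1}\omega^m$ and the function $u$ are independent of $s$, so only the product $L(J(s),f)\,S(J(s),f)$ must be differentiated, producing two terms by the Leibniz rule. The first is $(\delta_w L)\,S(J,f)$, which Lemma \ref{derL} — whose hypothesis is exactly the $f$-extremality just recorded — evaluates to $L(\barL - L)w$. The second is $L\,(\delta_w S)$, and Lemma \ref{derivative5} gives $\delta_w S = (L + \barL)w$, so this term is $L(L+\barL)w$. Adding them, the $L^2$ pieces cancel and the mixed pieces double:
$$ \frac{d}{ds}\Big|_{s=0}\big(L\,S(J,f)\big) = L(\barL - L)w + L(L + \barL)w = 2\,L\barL w, $$
whence $\mathrm{Hess}(\Phi)_J(\nabla''\nabla'' w, \nabla''\nabla'' u) = 8\,(u, L\barL w)_{L^2(f^{-2m-1})}$, and symmetry of the Hessian yields the stated formula with $u$ and $w$ in the asserted slots.

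For the commutativity $L\barL = \barL L$ I would argue purely formally. The chain
$$ 8(u,L\barL w) = \mathrm{Hess}(\Phi)_J(\nabla''\nabla'' u,\nabla''\nabla'' w) = \mathrm{Hess}(\Phi)_J(\nabla''\nabla'' w,\nabla''\nabla'' u) = 8(w, L\barL u) $$
says precisely that $L\barL$ is self-adjoint for the pairing $(\cdot,\cdot)_{L^2(f^{-2m-1})}$ on real functions in $C^\infty(M)^K$. On the other hand $L=(\nabla''\nabla'')^\ast\nabla''\nabla''$ is manifestly self-adjoint by its defining relation (\ref{L}), and $\barL$ likewise by (\ref{barL}), so $(L\barL)^\ast = \barL^\ast L^\ast = \barL L$. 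A self-adjoint operator equals its own adjoint, hence $L\barL = \barL L$; this also supplies the alternative expression $8(u, \barL L w)$ for the Hessian.

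The computational heart is already contained in Lemmas \ref{derivative5} and \ref{derL}, so the present argument is really an assembly. The step needing the most care is the bookkeeping that legitimizes it: that the Hessian at the critical point genuinely is the $s$-derivative of $d\Phi(Y)$ with $u$ and the volume form held fixed — so that exactly the two Leibniz terms appear and nothing arises from differentiating $u$, the measure, or a connection — and that the undifferentiated contribution disappears because $L\,S(J,f)=0$ at a critical point. The final self-adjointness step must be handled with attention to the complex-conjugation conventions in the $L^2$ pairings, but it is otherwise routine.
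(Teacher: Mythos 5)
Your first half --- differentiating $4(u, LS(J,f))_{L^2(f^{-2m-1})}$ with $u$ and the measure held fixed, splitting by Leibniz, and evaluating the two terms by Lemma \ref{derL} and Lemma \ref{derivative5} to get $\mathrm{Hess}(\Phi)_J(\nabla^{\prime\prime}\nabla^{\prime\prime}u,\nabla^{\prime\prime}\nabla^{\prime\prime}w) = 8(u,L\barL w)$ --- is correct and is precisely the paper's computation. The gap is in your ``purely formal'' proof of $L\barL = \barL L$: it conflates two different extensions of the pairing $(\varphi,\psi)_{L^2(f^{-2m-1})} = \int_M \varphi\psi\, f^{-2m-1}\omega^m$ to complex-valued functions, and once a single extension is fixed the argument collapses. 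Note first that $L$ does not map real functions to real functions: $\overline{Lu} = \barL u$ for real $u$, and by Lemma \ref{Poisson} $\barL - L = \frac{i}{2}\{\,\cdot\,,S(J,f)\}$ is a nonzero operator whenever $S(J,f)$ is nonconstant, i.e.\ in the genuinely $f$-extremal case where the theorem has content. So ``self-adjointness for the pairing on real functions'' is not meaningful until you decide whether the pairing is extended bilinearly or sesquilinearly. If bilinearly --- the extension in which your Hessian-symmetry identity $(u,L\barL w) = (w,L\barL u)$ lives --- then the defining relation (\ref{L}), whose right-hand side (\ref{Hermitian}) conjugates the first argument, gives $(a,Lb) = (\barL a, b)$: the transpose of $L$ is $\barL$, not $L$. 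Consequently $(L\barL)^t = \barL^t L^t = L\barL$ holds identically, your symmetry identity is automatically true at every $J$ (critical or not), and it carries no information about $L\barL - \barL L$. If instead you extend sesquilinearly, then $L$ and $\barL$ are indeed self-adjoint and $(L\barL)^\ast = \barL L$; but then ``$L\barL$ is self-adjoint'' is literally the statement $L\barL = \barL L$ you are trying to prove, and Hessian symmetry does not deliver it --- converting the bilinear identity into a Hermitian one requires moving a complex conjugation past $L$, which is exactly the point at issue. Either way the step is vacuous or circular.

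The paper closes this step by a second computation rather than an abstract duality argument: Lemma \ref{first der} provides another expression $4(u,\barL S(J,f))$ for the first variation (equal to the first because $LS(J,f) = \barL S(J,f)$, by Lemma \ref{Poisson} applied with $u = S(J,f)$), and differentiating it along the same curve --- now using the conjugate of Lemma \ref{derL}, namely $(\frac{d}{dt}|_{t=0}\barL)S(J,f) = \barL(L-\barL)w$ --- yields $8(u,\barL L w)$ for the same Hessian; comparing the two answers gives the commutativity. If you want to stay closer to your own scheme, replace symmetry by reality: since $\Phi$ is real-valued, $8(u,L\barL w)$ is real, while its complex conjugate equals $8(u,\barL L w)$ because $\overline{L\barL w} = \barL L w$ for real $w$; hence $(u,(L\barL - \barL L)w) = 0$ for all real $u, w \in C^\infty(M)^K$, and since $\frac{1}{i}(L\barL - \barL L)w$ is a real $K$-invariant function, taking it as the test function $u$ forces $L\barL w = \barL L w$.
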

\begin{proof}
Suppose $J^{-1}\dot{J} = 2\Re \nabla^i\nabla_\barj\, w\,\frac{\partial}{\partial z^i} \otimes d\overline{z^j}$, or
$\nabla^{\prime\prime}\nabla^{\prime\prime} w$ by our identification. Then by Lemma \ref{first der}, Lemma \ref{derL} 
and Lemma \ref{derivative5} we obtain
\begin{eqnarray*}
\mathrm{Hess}(\Phi)_J(\nabla^{\prime\prime}\nabla^{\prime\prime} u, \nabla^{\prime\prime}\nabla^{\prime\prime} w) &=& 
\frac{d}{dt}|_{t=0} 4(u,LS(J,f))_{L^2(f^{-2m-1})}\\
&=& 4(u, (\frac{d}{dt}|_{t=0} L)S(J,f) + L\frac{d}{dt}|_{t=0} S(J(t),f))_{L^2(f^{-2m-1})}\\
&=& 4(u, L(\barL - L)w + L(L + \barL)w)_{L^2(f^{-2m-1})}\\
&=& 8(u,L\barL w)_{L^2(f^{-2m-1})}.
\end{eqnarray*}
Similarly, we obtain
\begin{eqnarray*}
\mathrm{Hess}(\Phi)_J(\nabla^{\prime\prime}\nabla^{\prime\prime} u, \nabla^{\prime\prime}\nabla^{\prime\prime} w) &=& 
\frac{d}{dt}|_{t=0} 4(u,\barL S(J,f))_{L^2(f^{-2m-1})}\\
&=& 4(u, (\frac{d}{dt}|_{t=0} \barL)S(J,f) + \barL\frac{d}{dt}|_{t=0} S(J(t),f))_{L^2(f^{-2m-1})}\\
&=& 4(u, \barL(L - \barL)w + \barL(L + \barL)w)_{L^2(f^{-2m-1})}\\
&=& 8(u,\barL L w)_{L^2(f^{-2m-1})}.
\end{eqnarray*}
This completes the proof of Theorem \ref{Hessian}.
\end{proof}
The following theorem extends a theorem of Calabi \cite{calabi85} for extremal K\"ahler metrics.
\begin{thm}\label{Calabi}
If $g = \omega J$ is an $f$-extremal K\"ahler metric 
with $K = J\grad f$ then the centralizer $\mathfrak g^K$ of $K$ in the reduced Lie algebra $\mathfrak g$ of holomorphic vector fields
has the following structure:
\begin{itemize}
\item[(a)] $\mathfrak g_0^K := (\mathfrak i(M) \cap \mathfrak g^K)\otimes \bfC$ is the maximal reductive subalgebra of
$\mathfrak g^K$ where $\mathfrak i(M)$ denotes the real Lie algebra of all Killing vector fields.
\item[(b)] $\grad^\prime S_{J,f} = g^{i\barj}\frac{\partial S_{J,f}}{\partial \overline{z^j}}$ is in the center of $\mathfrak g_0^K$.
\item[(c)] $\mathfrak g^K = \mathfrak g_0^K + \sum_{\lambda \ne 0} \mathfrak g_\lambda^K$ where 
$\mathfrak g_\lambda^K$ is the $\lambda$-eigenspace of $\mathrm{ad}(\grad^\prime S_{J,f})$. Moreover, we have
$[\mathfrak g_\lambda^K,\mathfrak g_\mu^K] \subset \mathfrak g_{\lambda + \mu}^K$. 
\end{itemize}
\end{thm}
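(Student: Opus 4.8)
The plan is to translate everything into the fourth-order operators $L$ and $\barL$ of \eqref{L}, \eqref{barL} and to exploit the commutation $L\barL = \barL L$ on $C_\bfC^\infty(M)^K$ supplied by Theorem \ref{Hessian}. First I would record the dictionary between $\mathfrak g^K$ and these operators. Since $\nabla^{\prime\prime}\nabla^{\prime\prime}u = 0$ is exactly the condition that $\grad^\prime u$ be holomorphic, the assignment $u\mapsto X_u = \grad^\prime u$ identifies $\ker L$, modulo the constants which are killed by $\grad^\prime$, with $\mathfrak g^K$; the $K$-invariance is automatic because we work inside $C_\bfC^\infty(M)^K$. Next, Lemma \ref{Poisson} rewrites the bracket as $\{S(J,f),u\} = 2i(\barL - L)u$, so under this identification the operator $\mathrm{ad}(\grad^\prime S_{J,f})$ becomes $2i(\barL - L)$, which on $\ker L$ reduces to $2i\,\barL$. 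Because the metric is $f$-extremal, $\grad^\prime S_{J,f}$ is holomorphic by Lemma \ref{extremal} and $S_{J,f}\in C^\infty(M)^K$, so $\grad^\prime S_{J,f}\in\mathfrak g^K$; this already yields (b), since $\mathfrak g_0^K$ is by definition the kernel of $\mathrm{ad}(\grad^\prime S_{J,f})$ and every element commutes with itself.

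The analytic heart is the spectral step. Both $L$ and $\barL$ are self-adjoint, non-negative, elliptic of order four for the weighted inner product $(\cdot,\cdot)_{L^2(f^{-2m-1})}$, and at the critical point they commute by Theorem \ref{Hessian}. Hence they admit a simultaneous orthogonal eigenspace decomposition with discrete real spectrum and finite-dimensional eigenspaces. In particular $\ker L$ is $\barL$-invariant, so $\barL|_{\ker L}$ is a self-adjoint endomorphism with real eigenvalues $\mu$ and mutually orthogonal eigenspaces. Transporting this through the dictionary, $\mathrm{ad}(\grad^\prime S_{J,f})$ acts on $\mathfrak g^K$ semisimply with eigenvalues $\lambda = 2i\mu$, which gives the decomposition $\mathfrak g^K = \mathfrak g_0^K + \sum_{\lambda\neq 0}\mathfrak g_\lambda^K$ of (c). The inclusion $[\mathfrak g_\lambda^K,\mathfrak g_\mu^K]\subset\mathfrak g_{\lambda+\mu}^K$ is then immediate from the Jacobi identity, since $\mathrm{ad}(\grad^\prime S_{J,f})$ is a derivation. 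One small check I would make is that the eigenvalue equation $2i\,\barL u = \lambda u + c$ forces the constant $c$ to vanish, using $\barL 1 = 0$ together with self-adjointness; the same computation shows that $\mathfrak g_0^K$ corresponds precisely to $\ker L\cap\ker\barL$.

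For (a) I would identify this zero eigenspace explicitly. Writing $u = \varphi + i\psi$ with $\varphi,\psi$ real, the conditions $\nabla^{\prime\prime}\nabla^{\prime\prime}u = 0$ and $\nabla^\prime\nabla^\prime u = 0$ defining $\ker L\cap\ker\barL$ become, after conjugating one relation and adding and subtracting, the separate conditions $\nabla^\prime\nabla^\prime\varphi = \nabla^\prime\nabla^\prime\psi = 0$, that is, $\varphi$ and $\psi$ are real Killing potentials. Thus $\mathfrak g_0^K = (\mathfrak i(M)\cap\mathfrak g^K)\otimes\bfC$, the complexification of the Lie algebra of the compact group $\mathrm{Isom}(M,g)\cap G^K$, and in particular it is reductive.

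The remaining and, I expect, hardest point is the assertion that $\mathfrak g_0^K$ is the \emph{maximal} reductive subalgebra of $\mathfrak g^K$: the eigenspace decomposition alone does not force this, because inside a complex reductive overalgebra an element can have purely imaginary $\mathrm{ad}$-spectrum without being central, so $\ker\mathrm{ad}(\grad^\prime S_{J,f})$ need not be maximal on algebraic grounds. To close this gap I would transport the finite-dimensional structure theory for a critical point of the norm-square of a moment map, in the Kempf--Ness and Kirwan--Ness form used by Wang \cite{Lijing06}, into the present Donaldson--Fujiki set-up: the Hessian formula of Theorem \ref{Hessian}, together with the non-negativity and commutation of $L$ and $\barL$, says exactly that $J$ is such a critical point for the $\mathrm{Ham}(M)^K$-action, and the reductive part of the stabilizer of such a point is the complexification of its compact stabilizer, namely $\mathrm{Isom}(M,g)\cap G^K$. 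Making this transfer rigorous in the infinite-dimensional setting, controlling the Sobolev completions and ellipticity so that the finite-dimensional argument applies, is the main obstacle, and it is here that I would follow the arguments of \cite{futaki07.1} most closely.
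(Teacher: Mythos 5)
Your overall strategy is the paper's: identify $\mathfrak g^K$ with $\ker L$ modulo constants, use the commutation $L\barL=\barL L$ from Theorem \ref{Hessian} to split $\ker L$ into eigenspaces of the self-adjoint operator $\barL$, and identify the zero eigenspace $\ker L\cap\ker\barL$ with $(\mathfrak i(M)\cap\mathfrak g^K)\otimes\bfC$ by separating real and imaginary parts. But your dictionary contains a genuine error that is not cosmetic: you assert that $\mathrm{ad}(\grad^\prime S_{J,f})$ corresponds to $2i(\barL-L)$, hence has purely imaginary spectrum $\lambda=2i\mu$. The factor $i$ is wrong. The Lie algebra structure of $\mathfrak g$ is $[X_u,X_w]=X_{\{u,w\}}$ with $\{u,w\}=\nabla^iu\nabla_iw-\nabla^iw\nabla_iu=u^\alpha w_\alpha-w^\alpha u_\alpha$, whereas the bracket $\{u,S(J,f)\}$ carrying the coefficient $\tfrac{i}{2}$ in Lemma \ref{Poisson} is the symplectic Poisson bracket $X_uS(J,f)$; these two brackets differ by exactly a factor of $i$, which is why Lemma \ref{Poisson} can simultaneously be written with the real coefficient, $(\barL-L)u=\tfrac12(u^\alpha S(J,f)_\alpha-S(J,f)^\alpha u_\alpha)$. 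Consequently $\mathrm{ad}(\grad^\prime S_{J,f})$ acts on potentials as $\pm2(\barL-L)$, a real multiple — this is precisely the identity $\lambda u=2\barL u=2(\barL-L)u=S(J,f)^\alpha u_\alpha-u^\alpha S(J,f)_\alpha$ on which the paper's proof runs. Since $\barL$ is self-adjoint and non-negative for $(\cdot,\cdot)_{L^2(f^{-2m-1})}$ and preserves the finite-dimensional space $\ker L$, the spectrum of $\mathrm{ad}(\grad^\prime S_{J,f})$ on $\mathfrak g^K$ is real and of one sign, not purely imaginary. (Your eigenspace decomposition in (c) is still the correct collection of subspaces, but the eigenvalue labels you attach to it are false.)

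This error is exactly what forces your retreat on part (a): believing the spectrum purely imaginary, you conclude maximality cannot be had algebraically and appeal to an infinite-dimensional Kempf--Ness/Kirwan transfer which you acknowledge you do not carry out, so your proof of (a) is incomplete at that point. With the correct real, one-signed spectrum, maximality is elementary Lie theory, and this is the (Calabi-style) step the paper leaves implicit: $\mathfrak n:=\bigoplus_{\lambda\neq0}\mathfrak g^K_\lambda$ is a nilpotent ideal and $\mathfrak g^K=\mathfrak g_0^K\ltimes\mathfrak n$; if $\mathfrak h\supseteq\mathfrak g_0^K$ is any reductive subalgebra of $\mathfrak g^K$, then $\grad^\prime S_{J,f}\in\mathfrak h$, and every element of a reductive Lie algebra has traceless adjoint action, so $\mathrm{tr}\,\mathrm{ad}_{\mathfrak h}(\grad^\prime S_{J,f})=0$; but $\mathrm{ad}_{\mathfrak h}(\grad^\prime S_{J,f})$ is diagonalizable with eigenvalues all of one sign, so it vanishes, giving $\mathfrak h\subseteq\ker\mathrm{ad}(\grad^\prime S_{J,f})=\mathfrak g_0^K$ and hence $\mathfrak h=\mathfrak g_0^K$. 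So the fix is to drop the spurious $i$, record that the eigenvalues of $2\barL|_{\ker L}$ are real and non-negative, and close (a) with the trace argument above; your treatment of (b), of the bracket relation in (c), and of the identification $\ker L\cap\ker\barL=(\mathfrak i(M)\cap\mathfrak g^K)\otimes\bfC$ is otherwise sound and matches the paper.
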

\begin{proof}
By Theorem \ref{Hessian}, $L\barL = \barL L$ on $C^\infty_\bfC(M)^K$. Therefore $\barL$ maps $\ker L$ to $\ker L$,
and we have the direct sum decomposition 
$$ \ker L = \sum_\lambda E_\lambda $$
into the eigenspaces of $2\barL$. Further by Lemma \ref{Poisson}
\begin{eqnarray*}
\lambda u &=& 2\barL u \\
&=& 2(\barL - L) u\\
&=& S(J,f)^\alpha u_\alpha - u^\alpha S(J,f)_\alpha.
\end{eqnarray*}
This shows
$$ [\grad^\prime S(J,f), \grad^\prime u] = \lambda \grad^\prime u.$$
Thus we have $E_\lambda = \mathfrak g_\lambda^K$. For $\lambda = 0$ we have
$$ \mathfrak g_0^K = E_0 = \ker L \cap \ker \barL.$$
Since $\barL u = 0$ is equivalent to $L\baru = 0$, if $u \in \mathfrak g_0^K$ then
$u$ satisfies both $L u = 0$ and $L\baru = 0$. This implies $L\Re u = 0$ and $L\Im u = 0$.
In general if $\grad^\prime u$ is a holomorphic vector field for a real smooth function $u$
then $J\grad\, u$ is a Killing vector field. Hence we obtain
$$ \mathfrak g_0^K = (\mathfrak i(M) \cap \mathfrak g_0^K)\otimes \bfC.$$
\end{proof}
Now we are in a position to prove Theorem \ref{main thm}
\begin{proof}[Proof of Theorem \ref{main thm}]
If $g = \omega J$ is a conformally Einstein-Maxwell K\"ahler metric, then $g$ is an $f$-extremal K\"ahler
metric with $S(J,f)$ is a constant function. Therefore, in this case 
$$\mathfrak g^K = \mathfrak g_0^K = (\mathfrak i(M) \cap \mathfrak g^K)\otimes \bfC.$$
Since $\mathrm{Isom}(M,g)$ is compact, $\mathfrak g^K$ is reductive.
This completes the proof of Theorem \ref{main thm}.
\end{proof}

\begin{ex}
We consider the case of the one-point-blow-up 
$\widehat{\boldsymbol{\mathbf{C}\mathbf P^2}}$ of $\boldsymbol{\mathbf{C}\mathbf P^2}$.
Let 
$\Delta_p$ be the convex hull of $(0,0),(p,0),(p,1-p),(0,1),\ (0<p<1)$ in $(\mu_1, \mu_2)$-plane.
Then for each $p$, $\Delta_p$ determines a K\"ahler class of $\widehat{\boldsymbol{\mathbf{C}\mathbf P^2}}$. The Hamiltonian function $f$ of a holomorphic Killing vector field is 
an affine linear function 
$f = a\mu_1+b\mu_2+c$, which is determined uniquely up to the choice of $c$.
Then $f$ is positive on $\Delta_p$
if and only if 
$$c,b+c,(1-p)b+pa+c,pa+c>0.$$ 
In \cite{FO17} we showed that the obstruction $Fut_f$ in Remark \ref{Futaki1} vanishes if and only if $K = J\mathrm{grad} f$ gives a critical point of the volume functional defined in Theorem 1.1 in \cite{FO17}. The Hamiltonian functions $f$ which correspond to critical points are determined in section 4.3 in \cite{FO17}. The following is the list of critical points.

\begin{enumerate}
\item[(1)]\ $a=\frac{p+2\sqrt{1-p}-2}{2{p}^{2}},b=0, 0 < p < 1$. 
\item[(2)]\ $a=-\frac{\sqrt{9{p}^{2}-8p}+p}{4{p}^{2}},b=0$. $\frac 89 < p < 1$. 
\item[(3)]\ $a=\frac{\sqrt{9{p}^{2}-8p}-p}{4{p}^{2}},b=0$. $\frac 89 < p < 1$.
\item[(4)]\ $a=-\frac{\sqrt{{p}^{4}-4{p}^{3}+16{p}^{2}-16p+4}-{p}^{2}+4p-2}{2{p}^{3}-4{p}^{2}+12p-8},
b=-\frac{\sqrt{{p}^{4}-4{p}^{3}+16{p}^{2}-16p+4}}{{p}^{3}-2{p}^{2}+6p-4}$.\\
$0 < p < \alpha$. 
\item[(5)]\ $a=\frac{\sqrt{{p}^{4}-4{p}^{3}+16{p}^{2}-16p+4}+{p}^{2}-4p+2}{2{p}^{3}-4{p}^{2}+12p-8},
b=\frac{\sqrt{{p}^{4}-4{p}^{3}+16{p}^{2}-16p+4}}{{p}^{3}-2{p}^{2}+6p-4}$.\\
$0 < p < \alpha$.  
\item[(6)]\ $a=\frac{2\sqrt{-9{b}^{2}{p}^{3}+\left( 21{b}^{2}+1\right) {p}^{2}+\left( 1-16{b}^{2}\right) p+4{b}^{2}-1}+3b{p}^{2}+\left( 1-2b\right) p}{6{p}^{2}-4p}$.
\item[(7)]\ $a=-\frac{2\sqrt{-9{b}^{2}{p}^{3}+\left( 21{b}^{2}+1\right) {p}^{2}+\left( 1-16{b}^{2}\right) p+4{b}^{2}-1}-3b{p}^{2}+\left( 2b-1\right) p}{6{p}^{2}-4p}$.
\end{enumerate}
Here, $\alpha$ is the smallest positive root of $p^4-4p^3+16p^2-16p+4=0$.
In the cases (1), (2) and (3) we have $b = 0$ which shows the solution has to have $U(2)$-symmetry.
In fact LeBrun \cite{L2} constructed a solution in each cases. $K$ and the centralizer $G^K$ are then
of the form
$$
 \left(\begin{array}{ccc}
\alpha & 0 & 0 \\
0 & \beta & 0 \\
0 & 0 & \beta
\end{array}\right), \qquad
\left\{ \left(\begin{array}{ccc}
\ast & 0 & 0 \\
0 & \ast & \ast \\
0 & \ast & \ast
\end{array}\right)\right\} $$
in $PGL(3, \bfC)$ where $\alpha \ne \beta$, $\alpha \ne 0$ and $\beta \ne 0$.
In the cases (4), (5), (6) and (7), the existence is not known at this moment of writing,
but if a solution exists it must have $U(1) \times U(1)$-symmetry. 
$K$ and the centralizer $G^K$ are then
of the form
$$
 \left(\begin{array}{ccc}
\alpha & 0 & 0 \\
0 & \beta & 0 \\
0 & 0 & \gamma
\end{array}\right), \qquad
\left\{ \left(\begin{array}{ccc}
\ast & 0 & 0 \\
0 & \ast & 0\\
0 & 0 & \ast
\end{array}\right) \right\}$$
in $PGL(3, \bfC)$ where $\alpha$, $\beta$ and $\gamma$ are non-zero and mutually distinct.
\end{ex}


\section{Construction of $f$-extremal K\"ahler metrics}

In this section we give a construction of $f$-extremal K\"ahler metrics 
on $\bfC\bfP^1 \times M$ when $M$ is an $(m-1)$-dimensional compact complex manifold with a
K\"ahler metric $g_2$ of 
constant scalar curvature $s_{g_2}=c$. This is an extension of a construction of conformally K\"ahler
Einstein-Maxwell metrics given in section 3 of \cite{FO17}.

Let $g_1$ be an $S^1$-invariant metric on $\mathbf{C}\mathbf P^1$.
Using the action-angle coordinates
$(t,\theta)\in (a,b)\times (0,2\pi]$, the $S^1$-invariant metric $g_1$ can be written as
$$
g_1=\dfrac{dt^2}{\Psi(t)}+\Psi(t)d\theta^2
$$
for some smooth function $\Psi (t)$ where the Hamiltonian function of the generator
of the $S^1$-action is $t$. Therefore $f = t$ in this case.
We put $g=g_1+g_2$.
We wish to construct $\Psi$ such that the gradient vector field with respect to $g$ of the scalar curvature $s(\Tilde g)$ of the
Hermitian metric $\Tilde g = g/t^2$
on $\mathbf{C}\mathbf P^1\times M$ is a holomorphic vector field.
This happens to be the case if 
\begin{equation}\label{4.1}
s(\Tilde g) = dt + e
\end{equation}
for some constants $d$ and $e$.

Since the scalar curvature of $g_1$ is given by
$$
s_1=\Delta_{g_1}\log \Psi=-\Psi''(t),
$$
the scalar curvature of $g$ is given by
$$
s=s_1+s_2=c-\Psi''(t).
$$
It follows that the equation (\ref{4.1}) is equivalent to 
\begin{equation}\label{eq:5.1}
dt + e =2\left(\dfrac{2m-1}{m-1}\right)t^{m+1}\Delta_g(t^{1-m})+
(c-\Psi''(t))t^2.
\end{equation}
Using
$$
\Delta_{g_1}\left(\dfrac{1}{t^{m-1}}\right)=(m-1)\left(
\dfrac{\Psi}{t^m}
\right)',
$$
the equation \eqref{eq:5.1} reduces to the ODE
\begin{equation}\label{eq:5.2}
t^2\Psi''-2(2m-1)t\Psi'+2m(2m-1)\Psi=ct^2-dt-e.
\end{equation}
The general solution of the equation \eqref{eq:5.2} is
\begin{equation}\label{eq:5.3}
\Psi=At^{2m}+Bt^{2m-1}+
\dfrac{c}{2(m-1)(2m-3)}t^2-\dfrac{d}{2(m-1)(2m-1)}t-\dfrac{e}{2m(2m-1)}
\end{equation}
with the requirement of 
$$\Psi(t)>0$$
 on $(a,b)$. 
The boundary conditions are
$$
\Psi(a)=\Psi(b)=0,\Psi'(a)=-\Psi'(b)=2,
$$
which reduce to a simultaneous linear equation for $A,B,c, d$ and $e$.
The space of solutions are $1$-dimensional. If we express it in terms $B$ we have
the following expression. \\
(i)\ \ $A$ is given by
$$
A=-\frac{A_1 B}{-2 a{b}^{2m-1}\ m+2\ {b}^{2m}\ m+2{a}^{2m-1}\ bm-2 {a}^{2m}m-2 {b}^{2m}+2{a}^{2m}}
$$
with
\begin{eqnarray*}
A_1 &=&  2{b}^{2m-1}m-2a{b}^{2m-2}m+2{a}^{2m-2}bm-2 {a}^{2m-1}m-3 {b}^{2m-1}\\
&& +a{b}^{2m-2} -{a}^{2m-2} b+3{a}^{2m-1}.
\end{eqnarray*}
(ii)\ \ $c$ is given by
\begin{equation*}
c= \frac {(m-1)(2m-3) P}Q B -\frac{4(m-1)(2m-3)}{b-a}
\end{equation*}
with
\begin{eqnarray*}
P &=& ( 2{a}^{m}{b}^{m+1}m-2{a}^{m+1}{b}^{m}m-{a}^{m}{b}^{m+1}-a{b}^{2m}+{a}^{m+1}{b}^{m}+{a}^{2m}b)\\
&&\cdot( 2{a}^{m}{b}^{m+1}m-2{a}^{m+1}{b}^{m}m-{a}^{m}{b}^{m+1}+a{b}^{2m}+{a}^{m+1}{b}^{m}-{a}^{2m}b),
\end{eqnarray*}
\begin{eqnarray*}
Q &=& ab( a{b}^{2m+2}m-2{a}^{2}{b}^{2m+1}m+{a}^{3}{b}^{2m}m+{a}^{2m}{b}^{3}m-2{a}^{2m+1}{b}^{2}m\\
&& +{a}^{2m+2}bm-a{b}^{2m+2}+{a}^{2}{b}^{2m+1}+{a}^{2m+1}{b}^{2}-{a}^{2m+2}b).
\end{eqnarray*}
(iii) $d$ is given by 
\begin{equation*}
d = \frac{(2(m-1)(2m-1)R}S B -\frac{4(a+b)(m-1)(2m-1)}{b-a}
\end{equation*}
with
\begin{eqnarray*}
R &=& 2{a}^{2m}{b}^{2m+3}{m}^{2}-2{a}^{2m+1}{b}^{2m+2}{m}^{2}-2{a}^{2m+2}{b}^{2m+1}{m}^{2}+2{a}^{2m+3}{b}^{2m}{m}^{2}\\
&&-3{a}^{2m}{b}^{2m+3}m+3{a}^{2m+1}{b}^{2m+2}m+3{a}^{2m+2}{b}^{2m+1}m\\
&&-3{a}^{2m+3}{b}^{2m}m+{a}^{2m}{b}^{2m+3}-{a}^{3}{b}^{4m}+{a}^{2m+3}{b}^{2m}-{a}^{4m}{b}^{3}),
\end{eqnarray*}
\begin{eqnarray*}
S &=& ab
(a{b}^{2m+2}m-2{a}^{2}{b}^{2m+1}m+{a}^{3}{b}^{2m}m+{a}^{2m}{b}^{3}m-2{a}^{2m+1}{b}^{2}m+{a}^{2m+2}bm\\
&&-a{b}^{2m+2}+{a}^{2}{b}^{2m+1}+{a}^{2m+1}{b}^{2}-{a}^{2m+2}b).
\end{eqnarray*}
(iv) $e$ is given by
\begin{equation*}
e = -\frac{m(2m-1)T}U B + \frac{4abm(2m-1)}{b-a}
\end{equation*}
with
\begin{eqnarray*}
T &=& 4{a}^{2m+1}{b}^{2m+3}{m}^{2}-8{a}^{2m+2}{b}^{2m+2}{m}^{2}+4{a}^{2m+3}{b}^{2m+1}{m}^{2} -8{a}^{2m+1}{b}^{2m+3}m\\
&&+16{a}^{2m+2}{b}^{2m+2}m-8{a}^{2m+3}{b}^{2m+1}m
+4{a}^{2m+1}{b}^{2m+3}-6{a}^{2m+2}{b}^{2m+2}\\
&&+4{a}^{2m+3}{b}^{2m+1}-{a}^{4}{b}^{4m}-{a}^{4m}{b}^{4},
\end{eqnarray*}
\begin{eqnarray*}
U &=& ab(a{b}^{2m+2}m-2{a}^{2}{b}^{2m+1}m+{a}^{3}{b}^{2m}m+{a}^{2m}{b}^{3}m-2{a}^{2m+1}{b}^{2}m+{a}^{2m+2}bm\\
&&-a{b}^{2m+2}+{a}^{2}{b}^{2m+1}+{a}^{2m+1}{b}^{2}-{a}^{2m+2}b).
\end{eqnarray*}
We used Maxima to obtain the above result. For example, if we put $B=0$ then we obtain
\begin{equation}\label{3}
\begin{split}
A&=0,\\
c&=-\frac{4(m-1)(2m-3)}{b-a},\\
d&=-\frac{4(a+b)(m-1)(2m-1)}{b-a},\\
e&=\frac{4abm(2m-1)}{b-a}
\end{split}
\end{equation}
and 
$$
\Psi(t)=-\frac{2(t-a)(t-b)}{b-a}.
$$
This $\Psi$ is positive on $(a,b)$ and satsifies the boundary conditions.

As another example, we set $a=1,b=2$ for simplicity, and put
\begin{equation}\label{4}
\begin{split}
A &= \frac{2\left( {2}^{2m+1}m-5\cdot
{2}^{2m}+8m+4\right) }{\left( -{2}^{m+1}m+{2}^{2m}+{2}^{m}-2\right) \left( {2}^{m+1}m+{2}^{2m}-{2}^{m}-2\right) },\\
B &=
\frac{-8\left( {2}^{2m}m-{2}^{2m+1}+2m+2\right) }{\left(- {2}^{m+1}m+{2}^{2m}+{2}^{m}-2\right) \left( {2}^{m+1}m+{2}^{2m}-{2}^{m}-2\right) },\\
c &=  0,\\
d &=
-\frac{4\left( m-1\right) \left( 2m-1\right) \left( -3m{2}^{2m+1}+{2}^{4m}+3\cdot{2}^{2m}-4\right) }{\left( -{2}^{m+1}m+{2}^{2m}+{2}^{m}-2\right) \left( {2}^{m+1}m+{2}^{2m}-{2}^{m}-2\right) },\\
e&=
\frac{4m\left( 2m-1\right) \left( -{2}^{2m+3}m+3\cdot{2}^{2m+1}+{2}^{4m}-8\right) }{\left( -{2}^{m+1}m+{2}^{2m}+{2}^{m}-2\right) \left( {2}^{m+1}m+{2}^{2m}-{2}^{m}-2\right) }.
\end{split}
\end{equation}
Then $\Psi$ satisfies the boundary conditions. Further $\Psi > 0$ on the interval $(1,2)$ 
because of  $Ad<0$
and the following Lemma.
\begin{lem}\label{positive}
Let $m$ be an integer greater than $1$, and suppose $0<a<b$. 
If the real valued function
$$
f(t)=\alpha t^{2m}+\beta t^{2m-1}+\gamma t^2+\delta t+\varepsilon
$$
with $\alpha \delta >0$ 
satisfies the boundary conditions
$$
f(a)=f(b)=0, \ f'(a)>0, -f'(b)>0
$$
then $f$ is positive on the interval $(a,b)$.
\end{lem}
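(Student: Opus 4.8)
The plan is to argue by contradiction, using a zero-counting estimate adapted to the lacunary shape of $f$ (it carries no terms of degrees $3,4,\dots,2m-2$). First I would differentiate three times: since $m\ge 2$, a direct computation gives
$$
f'''(t)=(2m-1)(2m-2)\,t^{2m-4}\bigl(2m\,\alpha\,t+(2m-3)\beta\bigr),
$$
and because $\alpha\neq 0$ (forced by $\alpha\delta>0$) and $t^{2m-4}>0$ for $t>0$, the sign of $f'''$ on $(0,\infty)$ is governed entirely by the linear factor. Hence $f'''$ has at most one zero on $(0,\infty)$. Applying Rolle's theorem three times, counting multiplicities, then gives that $f''$ has at most two zeros, $f'$ at most three zeros, and $f$ at most four zeros on $(0,\infty)$.

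Next I would assume, toward a contradiction, that $f$ is not strictly positive somewhere on $(a,b)$. The boundary data $f(a)=0,\ f'(a)>0$ and $f(b)=0,\ f'(b)<0$ show that $a$ and $b$ are simple zeros and that $f>0$ just to the right of $a$ and just to the left of $b$. If $f(t_\ast)\le 0$ for some interior $t_\ast$, then either $f$ dips strictly below zero, which forces two interior zeros where $f$ changes sign, or the minimum of $f$ on $[a,b]$ equals $0$ and is attained at an interior point, which is then a zero of multiplicity at least two. In either case $f$ has at least four zeros in $[a,b]$ counting multiplicity, hence, by the first step, exactly four. Since $a$ and $b$ are simple, Rolle's theorem with multiplicities yields at least three zeros of $f'$ in the open interval $(a,b)$.

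Finally I would invoke the hypothesis $\alpha\delta>0$ to locate one further zero of $f'$ outside $[a,b]$. Note $f'(0)=\delta$, while $f'(t)$ has the sign of $\alpha$ for large $t$. If $\alpha,\delta>0$, then $f'(b)<0$ whereas $f'(t)\to+\infty$, so $f'$ vanishes in $(b,\infty)$; if $\alpha,\delta<0$, then $f'(0)=\delta<0$ whereas $f'(a)>0$, so $f'$ vanishes in $(0,a)$. Either way $f'$ has at least four zeros on $(0,\infty)$, contradicting the bound of three established in the first step. Therefore $f>0$ throughout $(a,b)$.

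The computation of $f'''$ and the Rolle chain are routine; the delicate point, and the one place where $\alpha\delta>0$ is indispensable, is this last step. The common sign of $\alpha$ and $\delta$ is precisely what pins down the sign of $f'$ both at $0$ (through $f'(0)=\delta$) and at $+\infty$ (through the leading coefficient $\alpha$), so that the extra zero of $f'$ is guaranteed to fall on the correct side of $[a,b]$ rather than being one of the three already accounted for inside $(a,b)$.
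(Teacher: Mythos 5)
Your proof is correct, and it reaches the contradiction by a genuinely different mechanism than the paper's, though both are contradiction-plus-Rolle zero counts exploiting the gap in the exponents of $f$. The paper does not differentiate three times; it divides first: from
$$
\frac{f'(t)}{t}=2m\alpha t^{2m-2}+(2m-1)\beta t^{2m-3}+2\gamma+\frac{\delta}{t}
$$
it computes
$$
\left(\frac{f'(t)}{t}\right)'=t^{2m-4}\Bigl(2m(2m-2)\alpha t+(2m-1)(2m-3)\beta-\frac{\delta}{t^{2m-2}}\Bigr),
$$
and the hypothesis $\alpha\delta>0$ makes the bracketed expression strictly monotone on $t>0$ (its derivative is $2m(2m-2)\alpha+(2m-2)\delta t^{-(2m-1)}$, whose two terms share a sign), so $(f'/t)'$ changes sign at most once and hence $f'$ has at most two zeros on $(a,b)$; since a dip $f(c)\le 0$ forces at least three interior critical points, that is already a contradiction. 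Thus in the paper the hypothesis $\alpha\delta>0$ enters analytically, tightening the cap on interior critical points from three to two, whereas in your argument it enters at the boundary: your third derivative kills $\gamma$, $\delta$, $\varepsilon$ entirely and uses only $\alpha\neq 0$, capping the zeros of $f'$ on $(0,\infty)$ at three, so you must recover the lost $\delta$-information from $f'(0)=\delta$ and the sign of $f'$ near $+\infty$ (the sign of $\alpha$) to manufacture a fourth zero of $f'$ outside $[a,b]$. Both arguments are sound; yours is slightly longer (one extra differentiation, a two-case sign analysis, and a four-versus-three count instead of three-versus-two) but is arguably more transparent, being a pure Descartes/Rolle count plus an intermediate value step, and it isolates cleanly the single point where $\alpha\delta>0$ is indispensable, exactly as you observe at the end.
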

\begin{proof}
Suppose that there is a $c\in (a, b)$ such that $f(c)\le 0$. Then by the boundary conditions
$f$ has at least three critical points on $(a, b)$. On the other hand, we have 
$$
\dfrac{f'(t)}{t}=2m\alpha t^{2m-2}+(2m-1)\beta t^{2m-3}+2\gamma +
\dfrac{\delta}{t},
$$
and thus
\begin{equation}\label{prime}
\left(\dfrac{f'(t)}{t}\right)'=t^{2m-4}\{2m(2m-2)\alpha t+(2m-1)(2m-3)\beta
-\dfrac{\delta}{t^{2m-2}}\}.
\end{equation}
From $\alpha\delta>0$, the right hand side of (\ref{prime}) changes sign only once
on the region $t>0$. This implies $f'/t$ has at most two zeros on the interval
$(a,b)$. This is a contradiction, and completes the proof of Lemma \ref{positive}.
\end{proof}

\end{document}